\DeclareOldFontCommand{\rm}{\normalfont\rmfamily}{\mathrm}
\DeclareOldFontCommand{\sf}{\normalfont\sffamily}{\mathsf}
\DeclareOldFontCommand{\tt}{\normalfont\ttfamily}{\mathtt}
\DeclareOldFontCommand{\bf}{\normalfont\bfseries}{\mathbf}
\DeclareOldFontCommand{\it}{\normalfont\itshape}{\mathit}
\DeclareOldFontCommand{\sl}{\normalfont\slshape}{\@nomath\sl}
\DeclareOldFontCommand{\sc}{\normalfont\scshape}{\@nomath\sc}
\newcommand{\bbbr}{\mathbb R}
\newcommand{\mV}{\mathcal{V}}
\newcommand{\mA}{\mathcal{A}}
\newcommand{\mC}{\mathcal{C}}
\newcommand{\mE}{\mathcal{E}}
\newcommand{\mT}{\mathcal{T}}
\newcommand{\mR}{\mathcal{R}}
\newcommand{\mM}{\mathcal{M}}
\newcommand{\tprec}{\text{\emph{prec}}}
\newcommand{\lhull}{{\tt lin.hull}}
\newcommand{\chull}{{\tt conv.hull}}
\providecommand{\keywords}[1]{\textbf{Keywords:} #1}
\newtheorem{theorem}{Theorem}
\newtheorem{lemma}[theorem]{Lemma}
\newtheorem{proposition}[theorem]{Proposition}
\begin{document}

\title{A study of the Bienstock-Zuckerberg algorithm\thanks{The authors of this article would like to acknowledge support by grants Fondecyt 1151098 (MG and ORL), Fondecyt 1130681 (EM), Fondecyt 1150046 (DE), Conicyt PIA Anillo ACT 1407 (DE, MG, EM, and ORL), NSERC RGPIN 5837-08 (MQ) and Conicyt BCH 72130388 (GM).}}

\subtitle{Applications in Mining and Resource Constrained Project Scheduling}

\author[1]{Gonzalo Mu\~noz}
\author[2]{Daniel Espinoza}
\author[3]{Marcos Goycoolea}
\author[4]{Eduardo Moreno}
\author[5]{Maurice Queyranne}
\author[6]{Orlando Rivera}
\affil[1]{Industrial Engineering and Operations Research, Columbia University}
\affil[2]{Gurobi Optimization}
\affil[3]{School of Business, Universidad Adolfo Iba\~nez}
\affil[4]{Faculty of Engineering, Universidad Adolfo Iba\~nez}
\affil[5]{School of Business, University of British Columbia}
\affil[6]{School of Business and Faculty of Engineering, Universidad Adolfo Iba\~nez}

\maketitle

\begin{abstract}
We study a Lagrangian decomposition algorithm recently proposed by Dan Bienstock and Mark Zuckerberg for solving the LP relaxation of a class of open pit mine project scheduling problems. In this study we show that the Bienstock-Zuckerberg (BZ) algorithm can be used to solve LP relaxations corresponding to a much broader class of scheduling problems, including the well-known Resource Constrained Project Scheduling Problem (RCPSP), and multi-modal variants of the RCPSP that consider batch processing of jobs. We present a new, intuitive proof of correctness for the BZ algorithm that works by casting the BZ algorithm as a column generation algorithm. This analysis allows us to draw parallels with the well-known Dantzig-Wolfe decomposition (DW) algorithm. We discuss practical computational techniques for speeding up the performance of the BZ and DW algorithms on project scheduling problems. Finally, we present computational experiments independently testing the effectiveness of the BZ and DW algorithms on different sets of publicly available test instances. Our computational experiments confirm that the BZ algorithm significantly outperforms the DW algorithm for the problems considered. Our computational experiments also show that the proposed speed-up techniques can have a significant impact on the solve time. We provide some insights on what might be explaining this significant difference in performance.\\
\keywords{Column generation, Dantzig-Wolfe, Optimization, RCPSP}
\end{abstract}

\section{Introduction}
Resource constrained project scheduling problems (RCPSPs) seek to optimally schedule activities over time in such a way as to comply with precedence and resource usage constraints. These problems can be notoriously difficult. Despite great progress in optimization methodologies during the last fifty years, there are instances of RCPSP involving as few as sixty activities that cannot be solved with today's most effective algorithms \cite{psplib}. In multi-modal extensions of RCPSP, jobs can be processed in different ways (or modes). Changing the mode of a job affects its resource utilization, and its processing costs. In some applications, changing the mode of a job changes the duration of its processing time. In batch-processing extensions of RCPSP jobs are grouped together in clusters that must be processed together. For simplicity of notation, we will henceforth refer to multi-modal batch resource constrained project scheduling problems, simply as General Production Scheduling Problems, or GPSPs.

To date, the most effective methods for solving GPSPs, especially modal and batch variants, are based on integer programming methods that use the so-called time index formulations \cite{zhu06, berthold10, boland09, chicoisne12}. These formulations define a binary variable for each job-execution time combination. An important limitation of these formulations is that the linear programming (LP) relaxations are very difficult to solve. This is because they tend to be large and degenerate, even for small problem instances. 

While classical decomposition techniques and column generation approaches can be used for addressing some of these issues (specially complications related to the large number of variables), they often suffer from slow convergence rate. In this context, an important contribution was made by Bienstock and Zuckerberg \cite{bienstock09,bienstock10}, where the authors presented an alternative to tackle these limitations effectively. They developed a new algorithm that can considerably outperform classical methods in a broad class of problems, thus providing a novel set of tools with a high practical impact and a wide range of potential extensions.

In this paper we study the Bienstock-Zuckerberg (BZ) algorithm \cite{bienstock09,bienstock10} in depth, and find that it can be used to overcome the stated limitations on a wide range of scheduling problems. We provide additional evidence to that in  \cite{bienstock09,bienstock10}, advocating for the efficacy of the algorithm in practice, and we provide new insights on it that allow further extensions. Specifically, we study the BZ algorithm as an approach to batch, multi-modal production scheduling problems where jobs can have arbitrary durations, but where these durations are not affected by changes of mode. The application of the BZ algorithm to this class of problems requires us to extend the algorithmic template as it was originally proposed. We are specifically concerned about this class of problems because, in addition to generalizing the well-known RCPSP, it generalizes three very important problems that arise in the context of mine planning: Underground Production Scheduling Problems (UPSPs), Open Pit Phase Design Problems (OPPDPs), and Open Pit Production Scheduling Problems (OPPSPs). 

We present a new proof of algorithm correctness by casting the BZ algorithm as a column generation method, discuss algorithmic speed-ups, computationally test it on a variety of problem instances, and compare the methodology to the more traditional Dantzig-Wolfe decomposition (DW) method. As part of our analysis we prove that the BZ algorithm is closely related to a decomposition scheme that produces a bound somewhere in between that of the DW method, and that of the LP relaxation. This study allows us to conclude that the BZ algorithm is an effective way of solving the LP relaxation of large time index formulations, significantly outperforming the DW method on all considered problem classes, and provides insights on the effectiveness of the methodology.

This article is organized as follows. In Section \ref{sec:background} we present a literature review of related scheduling work in mine planning applications and mathematical programming methodologies. In Section \ref{sec:formulation} we present an integer programming model that generalizes the classes of problems we are interested in studying. In Section \ref{sec:reformulation} we present a reformulation of this model that fits the algorithmic framework we will be analyzing. In Section \ref{sec:methodology} we present a general column generation framework that can be used to solve the problem. We use this column generation approach to motivate the BZ algorithm, and facilitate a comparison to the DW method. We also introduce important speed ups for the BZ algorithm. Computational results analyzing the performance of BZ and comparing this performance to that of DW are presented in Section \ref{sec:computations}.

\section{Background}\label{sec:background}

{\bf Scheduling applications in Mine Planning}~\\

In this article we are mainly interested in addressing scheduling problems that are of relevance to planning applications in the mining industry.  

The class of mining problems we are interested in include open pit and underground mine planning problems. In these problems, deposits are discretized into three-dimensional arrays known as block models. The problem to solve consists of deciding which blocks should be extracted, when they should be extracted, and what to do with the blocks once they are extracted. In this context, jobs correspond to extraction activities, modes correspond to different processing options and batches correspond to groups of blocks that must be extracted concurrently in order to comply with equipment extraction requirements. Resource constraints would be used to impose extracting, milling and refining capacity constraints. In an open-pit mine, precedence constraints would be used to impose that a mine must be extracted from the surface on downwards. In an under-ground mine, specifically in a stopping operation, precedence constraints would be used to impose that selected blocks branch out from a network of tunnels descending from the surface. 

In all of these mining problems the objective is to maximize net-present-value of the extracted minerals. This is different from traditional scheduling problems, in which the objective is typically to minimize completion time metrics such as makespan, maximum tardiness, or total throughput time. Another difference between mine planning problems and traditional scheduling problem is that in mining it is not a strict requirement to execute all jobs. In all other ways, the Underground Production Scheduling Problems (UPSPs) is identical to the RCPSP. The Open Pit Phase Design Problems (OPPDPs) is a multi-modal RCPSP in which all jobs take a single-time period to complete, regardless of the selected mode. What the mode affects is the objective function value and the resource consumption of each activity. The Open Pit Production Scheduling Problems (OPPSPs) is just like the OPPDP, with the addition that there are batch constraints that force groups of blocks to be extracted simultaneously. These batch constraints are used to enforce equipment operability constraints, with each batch corresponding to contiguous set of blocks typically called a \emph{bench-phase} or \emph{increment} in the mining industry. 

For an introduction to optimization in underground mining see Alford et al. \cite{alford07}. For related work in underground mining see Martinez and Newman \cite{martinez11}, Newman and Kuchta \cite{newman07} and O'Sullivan et al. \cite{osullivan14,osullivan15}. The user manuals of Deswik Scheduler \cite{deswik-scheduler} and MineMax iGantt \cite{minemax-igantt} illustrate how UPSP is solved in practical mining applications. For an introduction to Open Pit Mining see Hustrulid and Kuchta \cite{hustrulid06}, and the user manuals of Dassault Whittle \cite{whittle15}, MineMax Scheduler \cite{minemax-scheduler}, and MineMax Planner \cite{minemax-planner}. For a discussion on OPPSP see Goycoolea et al. \cite{goycoolea15}. For a general survey on OR applications in mine planning see Newman et al. \cite{newman10}.\\
 
\noindent {\bf Mathematical programming methodologies}~\\

To our knowledge, the first mathematical programming formulations of GPSPs dates back to Johnson \cite{johnson68} and Pritsker et al. \cite{pritsker69}, in the late 1960s. Each of these articles spawned its own track of academic articles on production scheduling problems. The first track, following the work of Johnson, mostly focused on strategic open pit mine planning problems. The second track, following the work of Pritsker et al., took a more general approach. Surprisingly, though many of the results found in these two tracks coincide, there are few, if any, citations connecting the two literatures.

The academic literature on exact optimization methods for GPSPs is immensely rich. Well-known surveys from the scheduling track include those of Graham et al. \cite{graham79}, Brucker et al. \cite{brucker99} and Hartmann and Briskorn \cite{hartmann10}. In a recent book by Artigues et al. \cite{artigues10} a very thorough survey of GPSP is presented, including a computational study that compares existing methodologies on benchmark instances. Surveys from the mining track include those of Newman et al. \cite{newman10}, Espinoza et al. \cite{espinoza13b} and Osanloo et al. \cite{osanloo08}.  

A brief summary of advances on solving the LP relaxation of time-index formulations is as follows. Since as early as the 1960s, most methods have been based on some form of decomposition that reduces the problem to a sequence of maximum closure or minimum cut problems. Johnson \cite{johnson68}, in the context of mining (single mode OPPDPs), was the first to attempt such an approach with a Dantzig-Wolfe decomposition. Shortly after, and independently, Fisher \cite{fisher73}, in the context of scheduling, proposed a Lagrangian Relaxation decomposition approach. Since then, a number of decomposition algorithms, primarily Lagrangian Relaxation decomposition algorithms, have been proposed for the problem in both literatures. Important examples include Dagdelen and Johnson \cite{dagdelen86} and Lambert and Newman \cite{lambert14} in mining (single mode OPPDPs), and Christofides et al. \cite{christofides87} and Mohring et al. \cite{mohring03} in scheduling. 

Some recent approaches are as follows. Chicoisne et al. \cite{chicoisne12} consider single-mode OPPDPs with a single renewable resource, and propose a decomposition algorithm that solves the continuous relaxation in polynomial time. Boland et al. \cite{boland09} consider a multi-modal variant of the same problem with two renewable resources, and propose a decomposition algorithm for the continuous relaxation that, while not provably polynomial, is very effective in practice. The BZ algorithm \cite{bienstock09}, developed shortly after, can be considered a generalization of this last algorithm that extends to multi-modal OPPMPs with an arbitrary number of renewable or non-renewable resources. This algorithm proved to be very efficient, even for 
extremely large instance sizes; it reduced the solving times drastically and it was able to tackle instances that could not be solved before. Berthold et al. \cite{berthold10} developed a branch-and-bound algorithm that combines mathematical programming and constraint programming methods to close a large number of open benchmark instances of RCPSP. Zhu et al. \cite{zhu06} developed a mathematical programming based algorithm for solving multi-modal variants of RCPSP without batch constraints, but where mode changes affect duration times. In his work he closes a large percentage of open benchmark instances. 

\section{Integer programming formulation}\label{sec:formulation}

We now describe an integer programming formulation for a class of production scheduling problems that generalizes the RCPSP, UPSP, OPPDP and OPPSP. As mentioned before, we simply refer to this class of problems as the General Production Scheduling Problem (GPSP). 

\noindent {\textbf{Sets:}}
\begin{itemize}
\item $\mA$ : activities that must be scheduled in the problem.
\item $\mC$ : clusters of activities that define a partition of $\mA$.
\item $\tprec(c)$ : clusters that must be initiated no later than cluster $c \in \mC$.
\item $\mR = \{1,\ldots,R\}$ : resources that are consumed when carrying out activities. 
\item $\mT = \{1,\ldots,T\}$ : time periods in which it is possible to initiate activities. 
\item $\mM_a = \{1,\ldots, M_a\}$ : possible modes for activity $a \in \mA$.
\end{itemize}

\noindent {\textbf{Parameters:}}
\begin{itemize}
\item $t^-_a$ : release date for activity $a \in \mA$.
\item $t^+_a$ : due date for activity $a \in \mA$.
\item $d_{a,m}$ : duration (number of time periods) of activity $a \in \mA$ when executed in mode $m \in \mM_a$.
\item $p_{a,m,t}$ : profit obtained if activity $a \in \mA$ is initiated in period $t \in \mT$ using mode $m \in \mM_a$.
\item $l(c_1,c_2)$ : lag (number of time periods) that must elapse before activities in cluster $c_2 \in \mC$ can be initiated, after activities in cluster $c_1 \in \tprec(c_2)$ have been initiated. 
\item $Q_{r,t}$ : amount of resource $r \in \mR$ available in time period $t \in \mT$. 
\item $q_{r,a,m}$ : amount of resource $r \in \mR$ consumed by activity $a \in \mA$ in each time period it is being executed, when executed in mode $m \in \mM_a$.
\end{itemize}

\noindent {\textbf{Variables:}}
\begin{align*}
x_{c,t}  & = \left\{ \begin{array}{rl} 1 & \text{ if the activities in cluster } c  \text{ all start in time period } t \\ 0 & \text{ otherwise} \end{array}\right.\\
y_{a,m,t} & = \left\{ \begin{array}{rl} 1 & \text{ if activity } a \text{ starts in time period } t \text{ using mode } m \\ 0 & \text{ otherwise} \end{array}\right.\\
\end{align*}

\noindent {\textbf{Objective function:}}
\begin{equation}\label{eq:psp:obj}
\text{maximize } \sum_{a \in \mA} \sum_{m \in \mM_a} \sum_{t \in \mT} p_{a,m,t} y_{a,m,t}
\end{equation}
Note that we express the objective function only in terms of the $y$ variables. There is no loss of generality in this due to constraints \eqref{eq:psp:ac}, below.\\

\noindent {\textbf{Constraints:}}~\\

\noindent Clusters can only be initiated once over the time horizon [ $\forall c \in \mC$ ]:
\begin{equation}\label{eq:psp:once}
\sum\limits_{t \in \mT} x_{c,t} \leq 1.
\end{equation}
Activities in a cluster must start simultaneously, and must be carried out using a unique mode [ $\forall c \in \mC, \; \forall a \in c, \; \forall t \in \mT$ ]:
\begin{equation}\label{eq:psp:ac}
x_{c,t} = \sum\limits_{m \in \mM_a} y_{a,m,t}.
\end{equation}
In order to initiate the activities in a cluster, all activities in preceding clusters must be initiated early enough so as to satisfy the lag requirement \;\; [ $ \forall c_2 \in \mC, \; \forall c_1  \in \tprec(c_2), \; \forall t \in \mT$ ]:
\begin{equation}\label{eq:psp:prec}
\sum\limits_{s \leq t} x_{c_2,s} \leq \sum\limits_{s \leq t - l(c_1,c_2) } x_{c_1,s}.
\end{equation}
The amount of resources consumed cannot exceed the amount of resources available each period [ $\forall r \in \mR, \; t \in \mT$ ]:
\begin{equation}\label{eq:psp:resources}
\sum\limits_{a \in \mA} \sum\limits_{m \in \mM_a}  q_{r,a,m} \sum\limits_{s=\max\{1,t-d_{a,m}+1\}}^{t} y_{a,m,s} \leq Q_{r,t}.
\end{equation}

Activities can only be scheduled after their release dates [ $\forall a \in \mA$ ]:
\begin{equation}\label{eq:psp:release}
\sum\limits_{m \in \mM_a} \sum\limits_{t = 1}^{t^-_a - 1} y_{a,m,t} = 0.
\end{equation}
Activities must be terminated no later than due dates and exactly one mode must be chosen for each executed activity [ $\forall a \in \mA : t^+_a < \infty$ ]:
\begin{equation}\label{eq:psp:due}
\sum\limits_{m \in \mM_a} \sum\limits_{t=1}^{t^+_a - d_{a,m} + 1} y_{a,m,t} = 1.
\end{equation}

Whenever $t^+_a = \infty$, there is no due date for activity $a$ and we do not include constraint \eqref{eq:psp:due}. Note, however, that due to \eqref{eq:psp:once} and \eqref{eq:psp:ac} we always have:
$$   \sum\limits_{m \in \mM_a} \sum\limits_{t \in \mT} y_{a,m,t} \leq 1.$$
Thus, even when $t^+_a = \infty$, there is an implicit constraint enforcing the choice of one mode and one time period at most.

It should be noted that the GPSP described by Formulation \eqref{eq:psp:obj}-\eqref{eq:psp:due} generalizes the scheduling formulations found in Bienstock and Zuckerberg \cite{bienstock10} in that it allows activities to have durations that span multiple time periods. This allows us to consider instances of RCPSP and UPSP which fell outside the scope of the Bienstock and Zuckerberg studies \cite{bienstock09, bienstock10}.

Though this formulation is intuitive, and also the most commonly used formulation in the academic literature, it must be reformulated so as to fit the algorithmic schemes that will be presented.

\section{Reformulation}\label{sec:reformulation}

In this section we present a reformulation of the GPSP formulation described in Section \ref{sec:formulation}. As we will see in Section \ref{sec:methodology}, this reformulation is key for the decomposition algorithms that we will discuss in this article.\\

\noindent For each $c \in \mC, a \in \mA, m \in \mM_a$ and $t \in \mT$ define,
\[ w_{c,t} = \sum\limits_{s=1}^t x_{c,s} \qquad \text{ and } \qquad z_{a,m,t} = \sum\limits_{i \in \mM_a} \sum\limits_{s=1}^{t-1} y_{a,i,s} + \sum\limits_{i=1}^m y_{a,i,t}. \]

\noindent In this way, $w_{c,t}$ is a binary variable that takes value one if and only if cluster $c$ is initiated ``by'' time $t$ (i.e., no later than $t$). Likewise, $z_{a,m,t}$ is a binary variable that takes value one if and only if activity $a$ is initiated by time $t-1$, or, if it is initiated in time period $t$, and its mode $i$ is such that $i \leq m$.\\

\noindent To see that it is possible to formulate the production scheduling problem given by \eqref{eq:psp:obj}-\eqref{eq:psp:due}, using the $(z,w)$ variables, note that we can map between the two variable spaces by means of the following linear sets of equations:
\begin{subequations}
\begin{align}
y_{a,m,t} & = z_{a,m,t} - z_{a,m-1,t} & \qquad \forall a \in \mA, \; m = 2, \ldots, M_a, \; t \in \mT \\
y_{a,1,t} & = z_{a,1,t} - z_{a,M_a,t-1} & \qquad \forall a \in \mA, \; t = 2, \ldots, T \\
y_{a,1,1} & = z_{a,1,1} & \qquad \forall a \in \mA \\
x_{c,t} & = w_{c,t} - w_{c,t-1} & \qquad \forall c \in \mC, \; \forall t = 2, \ldots, T \\
x_{c,1} & = w_{c,1} & \qquad \forall c \in \mC
\end{align}
\end{subequations}
Using this mapping we can substitute out the variables in \eqref{eq:psp:obj}-\eqref{eq:psp:due}, to trivially obtain the following equivalent formulation:\\

\noindent {\textbf{Objective function:}}
\begin{equation}\label{psp2:obj}
\text{max } \sum_{a \in \mA} \sum_{m \in \mM_a} \sum_{t \in \mT} \tilde p_{a,m,t} z_{a,m,t}
\end{equation}
\indent where,
\[ \tilde{p}_{a,m,t} = \left\{ 
\begin{array}{ll} 
p_{a,m,t} - p_{a,m+1,t} & \text{ if } m < M_a \\
p_{a,m,t} - p_{a,1,t+1} & \text{ if } m = M_a, \; t < T \\
p_{a,m,t}               & \text{ if } m = M_a, \; t = T \end{array} \right.
\]

\noindent {\textbf{Constraints:}}~\\

\noindent For $a \in \mA, \; m \in \{1,\ldots,M_a-1\}, \; t \in \mT$,:
\begin{equation}\label{psp2:zprec1}
z_{a,m,t} \leq  z_{a,m+1,t}.
\end{equation}
\noindent For $a \in \mA, \; m = M_a, \; t \in \{1,\ldots,T-1\}$,:
\begin{equation}\label{psp2:zprec2}
z_{a,m,t} \leq  z_{a,1,t+1}.
\end{equation}
For $c \in \mC, \; a \in c, \; t \in \mT$,
\begin{equation}\label{eq:wz}
w_{c,t} = z_{a,M_a,t}.
\end{equation}
For $c_2 \in \mC, \; c_1 \in \tprec(c_2), \; t \in \{1+l(c_1,c_2),\ldots,T\}$,
\begin{equation}\label{psp2:wprec}
w_{c_2,t} \leq  w_{c_1,t-l(c_1,c_2)}.
\end{equation}
For $r \in R, \; t \in \mT$,
\begin{equation}\label{psp2:resources}
\sum\limits_{a \in \mA} \sum\limits_{m \in \mM_a} \sum\limits_{s\in \mT } \tilde{q}^{r,t}_{a,m,s} z_{a,m,s} \leq Q_{r,t}.
\end{equation}
\indent where \[ \tilde{q}^{r,t}_{a,m,s} = \left\{ 
\begin{array}{ll} 
q_{r,a,m} \mathbf{1}_{[t-d_{a,m}+1,t]}(s)  - q_{r,a,m+1}  \mathbf{1}_{[t-d_{a,m+1}+1,t]}(s) & \text{ if }  m < M_a \\
q_{r,a,M_a} \mathbf{1}_{[t-d_{a,M_a}+1,t]}(s) - q_{r,a,1} \mathbf{1}_{[t-d_{a,1},t-1]}(s) & \text{ if } m = M_a \end{array} \right.\]
\indent and,
\[ \mathbf{1}_{[x,y]}(s) = \left\{\begin{array}{ll} 1 & \text{ if } x \leq s \leq y \\ 0 & \text{ otherwise.} \end{array}\right. \]
(The derivation of this last constraint from \eqref{eq:psp:resources} can be found in Appendix \ref{sec:resources}).\\
For $a \in \mA, \; m \in \mM_a, \; t < t^-_a$:
\begin{equation}\label{psp2:zfixed1}
z_{a,m,t} = 0.
\end{equation}
For $a \in \mA, \; m \in \mM_a, \; t \geq t^+_a$:
\begin{equation}\label{psp2:zfixed2}
z_{a,m,t} = 1.
\end{equation}

After the reformulation, if we substitute out the $w$ variables by using linear equalities \eqref{eq:wz}, we obtain what Bienstock and Zuckerberg \cite{bienstock10} call a General Precedence Constrained Problem (GPCP) :

\begin{eqnarray}
Z^* = & \max \ & \quad c'z \\
& \mbox{s.t.} \ & z_i \leq z_j \qquad \forall (i,j) \in I, \label{psp3:prec} \\
&& Hz \leq h, \label{psp3:side} \\
&& z \in \{0,1\}^n 
\end{eqnarray}

\noindent In this problem constraints \eqref{psp3:prec} correspond to constraints \eqref{psp2:zprec1}, \eqref{psp2:zprec2}, and \eqref{psp2:wprec}, and constraints \eqref{psp3:side} correspond to constraints \eqref{psp2:resources}, \eqref{psp2:zfixed1} and \eqref{psp2:zfixed2}.

It is interesting that despite the fact that the General Production Scheduling Problem (GPSP) formulated in Section \ref{sec:formulation} is more general than the scheduling problem formulations presented by Bienstock and Zuckerberg \cite{bienstock09}, both problems can be reformulated as an instance of GPCP. 

\section{Methodology.}\label{sec:methodology}

In this section we describe a generalized version of the Bienstock-Zuckerberg (BZ) algorithm, originally introduced in \cite{bienstock09, bienstock10}. More specifically, we describe a decomposition algorithm well suited for solving the LP relaxation of large mixed integer programming problems having form,

\begin{equation}\label{prob:gpcp}
\begin{array}{rl}
Z^{IP} = \max \ \; & c'z + d'u \\
\mbox{s.t.} \ \; & z_i \leq z_j, \qquad \forall (i,j) \in I, \\
& Hz + Gu \leq h,  \\
& z \in \{0,1\}^n.
\end{array}
\end{equation}

\noindent Like Bienstock and Zuckerberg \cite{bienstock10} before us, we will call this problem the General Precedence Constrained Problem (GPCP). It should be noted, however, that in our definition of GPCP we consider the presence of the extra $u$ variables. These variables can be used for other modeling purposes. For example, they can be used to model variable capacities, or, in the context of mining problems, they can be used to model the use of stockpiles or other requirements. 

Our presentation of the BZ algorithm is different than the original presentation in two respects: first, we cast it as a column generation method, and second, as stated before, we consider the presence of extra variables (the $u$ variables in \eqref{prob:gpcp}). These differences require a new proof of algorithm correctness that we present below. The advantage of presenting the algorithm this way is that it is easier to compare to existing decomposition algorithms, and thus, in our opinion, becomes easier to understand and extend it. It also opens up the possibility of developing new classes of algorithms that might be useful in other contexts.

Before we present the BZ algorithm, we present a generic column generation (GCG) algorithm that can be used as a framework for understanding the BZ algorithm. This column generation scheme can be used to solve a relaxation of mixed integer problems having form: 

\begin{equation}\label{prob:gen-ip}
\begin{array}{rl}
Z^{IP} = \max \ \; & c'z + d'u \\
\mbox{s.t.} \ \; & Az \leq b, \\
& Hz + Gu \leq h,  \\
& z \in \{0,1\}^n.
\end{array}
\end{equation}

Much like DW algorithm, this relaxation can yield bounds that are tighter than those obtained by solving the LP relaxation of the same problem. Throughout this section we will assume, without loss of generality, that $Az \leq b$ includes $0 \leq z \leq 1$. Other than this, we make no additional assumption on problem structure. 

\subsection{A general column generation algorithm}\label{sec:cg}

In this section we introduce a General Column Generation (GCG) algorithm that will later motivate the BZ algorithm. This column generation algorithm is presented as a decomposition scheme for computing upper bounds of \eqref{prob:gen-ip} that are no weaker than those provided by the LP relaxation. 

Given a set $S \subseteq R^n$, let \lhull$(S)$ denote the linear space spanned by $S$. That is, let \lhull($S$) represent the smallest linear space containing $S$. Let $P = \{ z \in \{0,1\}^n : Az \leq b \}$. Define problem,

\begin{equation}\label{prob:lin}
\begin{array}{rl}
Z^{LIN} = \max \ \; & c'z + d'u \\
\mbox{s.t.} \ \;& Az \leq b, \\
& Hz + Gu \leq h, \\
& z \in \textrm{\lhull}(P),
\end{array}
\end{equation}

The GCG algorithm that we present computes a value $Z^{UB}$ such that $Z^{IP} \leq Z^{UB} \leq Z^{LIN}$. Observe that the optimal value $Z^{LIN}$ of problem \eqref{prob:lin} is such that $Z^{IP} \leq Z^{LIN} \leq Z^{LP}$, where $Z^{LP}$ is the value of the linear relaxation of problem \eqref{prob:gen-ip}. 

The key to solving this problem is the observation that
\begin{equation}\label{eq:lhull}
\textrm{\lhull}(P) = \{ z : z = \sum\limits_{i = 1}^d \lambda_i v^i, \; \textrm{ for some } \lambda \in \mathbb{R}^n \},
\end{equation}
where $\{v^1,\ldots,v^d\}$ is a basis for \lhull$(P)$. If $V$ is a matrix whose columns $\{v^1,\ldots,v^d\}$ define a basis of \lhull$(P)$, it follows that problem \eqref{prob:lin} is equivalent to solving,

\begin{equation}\label{prob:gen-lb}
\begin{array}{llll}
Z(V) = & \max \ & c'V \lambda + d'u & \\
& \mbox{s.t.} \ & A V \lambda \leq b & (\alpha) \\
&& H V \lambda + G u \leq h & (\pi).
\end{array}
\end{equation}

In order for the optimal solution of \eqref{prob:gen-lb} to be optimal for  \eqref{prob:lin}, it is not necessary for the columns of $V$ to to define a basis of \lhull$(P$). It suffices for the columns of $V$ to span a subset of \lhull$(P)$ containing an optimal solution of \eqref{prob:lin}. This weaker condition is what the column generation seeks to achieve. That is, starting with a matrix $V$ with columns spanning at least one feasible solution of \eqref{prob:lin}, the algorithm iteratively computes new columns until it computes the optimal value $Z^{LIN}$. As we will see, in some cases it is possible for the algorithm to terminate before having computed $Z^{LIN}$. In these cases the algorithm will terminate having computed a value $Z^{UB}$ such that $Z^{UB} \leq Z^{LIN}$.

Henceforth, let $\alpha, \pi$ denote the dual variables corresponding to the constraints of problem \eqref{prob:gen-lb}. To simplify notation, we will henceforth assume that $\alpha$ and $\pi$ are always row vectors. Note that $\alpha, \pi \geq 0$.

To solve problem \eqref{prob:lin} with column generation we begin each iteration with a set of points $\{v^1, \ldots, v^k\}$ such that \lhull$(\{v^1, \ldots, v^k\}) \subseteq $\lhull$(P)$ contains at least one feasible solution of problem \eqref{prob:lin}. At each iteration we add a linearly independent point $v^{k+1}$ to this set, until we can prove that we have generated enough points so as to generate the optimal solution of \eqref{prob:lin}.

The first step of each iteration consists in solving problem \eqref{prob:gen-lb}, with a matrix $V^k$ having columns $\{v^1, \ldots, v^k\}$. Let $Z^L_k = Z(V^k)$. Let ($\lambda^k, u^k)$ and $(\alpha^k, \pi^k)$ be the corresponding optimal primal and dual solutions. Note that $Z^L_k = \alpha^k b + \pi^k h$.

Define $z^k = V^k \lambda^k$. It is clear that $(z^k, u^k)$ is feasible for \eqref{prob:lin}; hence, $Z^L_k \leq Z^{LIN}$. However, is $(z^k, u^k)$ optimal for \eqref{prob:lin}? To answer this question, observe that $(\alpha,\pi)$ is dual-feasible for problem \eqref{prob:gen-lb} if and only if $\pi G = d'$ and,

\[ \bar c(v, \alpha, \pi) \equiv c' v - \alpha Av - \pi H v = 0, \qquad \forall v \in V. \]

Let $V^P$ represent a matrix whose columns $\{v^1,\ldots,v^{\small |P|}\}$ coincide with set $P$. Since $V^P$ is clearly a generator of \lhull$(P)$, we know that the optimal solution of \eqref{prob:gen-lb} (for $V^P$) corresponds to an optimal solution of \eqref{prob:lin}. Thus, if $\bar c(v, \alpha^k, \pi^k) = 0$ for all $v \in V^P$ (or equivalently, $v \in P$), we conclude that $(z^k, u^k)$ is optimal for \eqref{prob:lin}, since we already know that $\pi^k G = d'$. On the other hand, if $(z^k, u^k)$ is not optional for \eqref{prob:lin}, we conclude that there must exist $v \in P$ such that $\bar c(v, \alpha^k, \pi^k) \neq 0$. 

This suggests how the column generation scheme for computing an optimal solution of \eqref{prob:lin} should work. Solve \eqref{prob:gen-lb} with $V^k$ to obtain primal and dual solutions $(z^k, u^k)$ and $(\alpha^k, \pi^k)$. Keeping with traditional column generation schemes, we refer to problem \eqref{prob:gen-lb} as the \emph{restricted master problem}. If $\bar c(v, \alpha^k, \pi^k) = 0$ for all $v \in P$, then $(z^k,u^k)$ is an optimal solution of \eqref{prob:lin}. If this condition does not hold, let $v^{k+1} \in P$ be such that $\bar c(v^{k+1}, \alpha^k, \pi^k) \not= 0$.  Note that $v^{k+1}$ must be linearly independent of the columns in $V^k$. In fact, every column $v$ of $V^k$ must satisfy $\bar{c} (v, \alpha^k, \pi^k) = 0$, hence so must any vector that can be generated with this set. We refer to the problem of computing a vector $v^{k+1}$ with $\bar{c}(v^{k+1}, \alpha^k, \pi^k) \not= 0$ as the \emph{pricing problem}. Obtain a new matrix $V^{k+1}$ by adding column $v^{k+1}$ to $V^k$, and repeat the process. Given that the rank of $V^k$ increases strictly with each iteration, the algorithm must finitely terminate.

The pricing problem can be tackled by solving:

\begin{equation}\label{prob:gen-ub}
\begin{array}{llll}
L(\pi) = & \max \ & c' v - \pi(Hv - h) & \\
& \mbox{s.t.} \ & Av \leq b, & \\
&& v \in \{0,1\}^n &
\end{array}
\end{equation}

For this, at each iteration compute $L(\pi^k)$, and let $\hat v$ be the corresponding optimal solution. Observe that $L(\pi^k) \geq Z^{IP}$ for all $k \geq 1$. In fact, since the $u$ variables are free in problem \eqref{prob:gen-lb}, and since $(\alpha^k,\pi^k)$ is dual-feasible in problem \eqref{prob:gen-lb}, we have that $d' - \pi^k G = 0$. This implies that problem \eqref{prob:gen-ub}, for $\pi^k$, is equivalent to
\begin{equation}\label{prob:gen-ub-2}
\begin{array}{llll}
& \max \ & c' v + d' u - \pi^k(Hv + G u - h) & \\
& \mbox{s.t.} \ & Av \leq b, & \\
&& v \in \{0,1\}^n. &
\end{array}
\end{equation}
Given that $\pi^k \geq 0$, problem \eqref{prob:gen-ub-2} is a relaxation of problem \eqref{prob:gen-ip}, obtained by penalizing constraints $Hz + Gu \leq h$. Hence $L(\pi^k) \geq Z^{IP}$ $\forall k\ \geq 1$.

Define $Z^U_k = \min \{ L(\pi^i) : i = 1,\ldots, k \}$ and note that $Z^U_k \geq Z^{IP}$, by the argument above. If $Z^U_k \leq Z^L_k$, we can define $Z^{UB} = Z^U_k$, and terminate the algorithm, as we will have that $Z^{IP} \leq Z^{UB} \leq Z^{LIN}$. In fact, since $Z_k^L \leq Z^{LIN}$, we have $Z^{IP} \leq Z^{UB} = Z^U_k \leq Z^L_k \leq Z^{LIN}$.

On the other hand, if $Z^U_k > Z^L_k$, then the optimal solution $\hat v$ of \eqref{prob:gen-ub} is such that $\bar c(\hat v, \alpha^k, \pi^k) \neq 0$, so we obtain an entering column by defining $v^{k+1} = \hat v$. To see this, note that 
\begin{align*}
Z^U_k - Z^L_k > 0 & \Leftrightarrow c'\hat v - \pi^k (H\hat v - h) - \alpha^k b - \pi^k h > 0 \\           
& \Leftrightarrow c'\hat v - \pi^k H\hat v - \alpha^k b > 0 \\           
& \Rightarrow c' \hat v - \pi^k H \hat v - \alpha^k A \hat v > 0 \quad (\textrm{since } \alpha^k b \geq \alpha^k A \hat v ) \\
& \Rightarrow \bar c(\hat v, \alpha^k, \pi^k) > 0. 
\end{align*} 

Observe that, to be precise, we do not need to know a starting feasible solution in order to solve problem \eqref{prob:lin}. In fact, if we do not have a feasible solution, we can add a variable to the right-hand-side of each constraint $Hz + Gu \leq h$, and heavily penalize it to proceed as in a Phase-I simplex algorithm (see \cite{bertsimas97}). The first iteration only needs a solution $v^1$ such that $Av^1 \leq b$.

\subsection{Comparison to the Dantzig-Wolfe decomposition algorithm.}\label{sec:dw}

When solving problems with form \eqref{prob:gen-ip} a common technique is to use the Dantzig-Wolfe decomposition \cite{dantzig60} to compute relaxation values. The Dantzig-Wolfe decomposition (DW) algorithm is very similar to the General Column Generation (GCG) algorithm presented in the previous section. In fact, the DW algorithm computes the optimal value of problem,
\begin{equation}\label{prob:dw}
\begin{array}{rl}
Z^{DW} = \max \ & c'z + d'u \\
\mbox{s.t.} \ & Az \leq b, \\
& Hz + Gu \leq h, \\
& z \in \textrm{\chull}(P).
\end{array}
\end{equation}

\noindent Given that \chull$(P) \subseteq \{ z : Az \leq b \}$, this problem is typically written as,
\begin{equation}\label{prob:dw2}
\begin{array}{rl}
Z^{DW} = \max \ & c'z + d'u \\
\mbox{s.t.} \ & z \in \textrm{\chull}(P), \\
& Hz + Gu \leq h, 
\end{array}
\end{equation}

or equivalently, as 
\begin{equation}\label{prob:dw3}
\begin{array}{rl}
Z^{DW} = \max \ & c'V \lambda + d'u \\
\mbox{s.t.} \ & \lambda \cdot 1 = 1, \\
& H V \lambda + Gu \leq h, \\
& \lambda \geq 0.
\end{array}
\end{equation}

In this formulation, the columns $\{v^1,\ldots,v^k\}$ of $V$ correspond to the extreme points of \chull$(P)$.

Given that \chull$(P) \subseteq$ \lhull$(P)$, it follows that $Z^{IP} \leq Z^{DW} \leq Z^{LIN} \leq Z^{LP}$. When $\chull(P) = \{ z : Az \leq b \}$ it is easy to see that $Z^{DW} = Z^{LIN} = Z^{LP}$. However, the following example shows that all of these inequalities can also be strict:
\begin{equation}\label{prob:example}
\begin{array}{rl}
Z^{IP} = \max \ & -x_1 + 2x_2 + x_3 \\
\mbox{s.t.} \ & -x_1 + x_2 \leq 0.5, \\
& x_1 + x_2 \leq 1.5, \\
& x_3 \leq 0.5, \\
& x \in \{0,1\}^3.
\end{array}
\end{equation}

\noindent By decomposing such that $Hz + Gu \leq h$ corresponds to $-x_1 + x_2 \leq 0.5$ we get,
\[ \{ z : Az \leq b \} = \{ z \in [0,1]^3 :  x_1 + x_2 \leq 1.5, \; x_3 \leq 0.5 \}, \]
and,
\[ P =  \{ z \in \{0,1\}^n  : Az \leq b \} = \{ (0,0,0), (0,1,0),(1,0,0) \}, \]
and so,
\begin{align*}
 \textrm{\lhull}(P) & = \{ (x_1,x_2,x_3) : x_3 = 0 \}, \\
 \textrm{\chull}(P) & = \{ (x_1,x_2,x_3) : 0 \leq x_1, \;0\leq x_2, \; x_1+x_2 \leq 1, \; x_3 = 0 \}.
\end{align*}
From this it can be verified that $z^{IP} = 0.0, \; z^{DW} = 1.25, \; z^{LIN} = 1.5, $ and $z^{LP} = 2.0$.

The DW algorithm is formally presented in Algorithm \ref{alg:dw}. The proof of correctness is strictly analogous to the proof presented in Section \ref{sec:cg} for the generic column generation algorithm.

\begin{figure}[ht!]
\begin{algorithm}[H]
\caption{The DW Algorithm.}\label{alg:dw}
\KwIn{A feasible mixed integer programming problem of form
\begin{equation}\label{alg:prob:gendw}
\begin{array}{llll}
Z^* = & \max \ & c'z + d'u & \\
& \mbox{s.t.} \ & Az \leq b & \\
&& Hz + Gu \leq h & \\
&& z \in \{0,1\}. 
\end{array}
\end{equation}
A matrix $V$ whose columns are feasible $0$-$1$ solutions of $Az \leq b$.\\}
\KwOut{
An optimal solution $(z^*,u^*)$ to
problem
\begin{equation}\label{alg:prob:gen:dw}
\begin{array}{rl}
Z^{DW} = \max \ & c'z + d'u \\
\mbox{s.t.} \ & z \in \textrm{\chull}(P), \\
& Hz + Gu \leq h. 
\end{array}
\end{equation} }
$j \leftarrow 1$ and $Z^U_0 \leftarrow \infty$.\\
$V^1 \leftarrow V$.\\
Solve the
\emph{restricted DW master problem},
\begin{equation}\label{alg:gen:lb:dw}
\begin{array}{llll}
Z^L_j =& \max \ & c' V^j \lambda + d' u & \\
& \mbox{s.t.} \ & 1 \cdot \lambda = 1 & \\
&& H  V^j \lambda + G u \leq h & \\
&& \lambda \geq 0 & 
\end{array}
\end{equation}
Let ($\lambda^j,u^j)$ be an optimal solution to
problem \eqref{alg:gen:lb:dw}, and let $(z^j, u^j)$, be the corresponding feasible solution of \eqref{alg:prob:gen:dw}, where $z^j =  V^j \lambda^j$. Let $\pi^j$ be
an optimal dual vector corresponding to constraints $H  V^j \lambda + Gu \leq h$.\\
Solve the \emph{DW pricing problem}. For this, consider the problem,
\begin{equation}\label{alg:gen:ub:dw}
\begin{array}{llll}
L(\pi) = & \max \ & c' v  - \pi'(Hv - h) & \\
& \mbox{s.t.} \ & Av \leq b, & \\
&& v \in \{0,1\}^n &
\end{array}
\end{equation}
and let $v^j$ represent an optimal solution of $L(\pi^j)$. Let $Z^U_j \leftarrow \min \{ L(\pi^j), Z^U_{j-1} \}$.\\
\If {$Z^L_j = Z^U_j$}
{ $z^* \leftarrow z^j$. Stop.}
\Else
{
$V^{j+1} \leftarrow [V^j,v^j]$.\\
$j \leftarrow j+1$. Go to step 3. \\
}
\end{algorithm}
\end{figure}

Observe that the DW pricing problem (see \eqref{alg:gen:ub:dw}) is exactly the same as the GCG pricing problem. However, since optimizing over $\{ v: Av \leq b, \; v \in \{0,1\} \}$ is exactly the same as optimizing over \chull$(P)$, we will have at every iteration $j \geq 1$ that $L(\pi^j) \geq Z^{DW}$. Thus, the bounds will never cross as they might in GCG, and there will be no early termination condition.

The main difference between the algorithms is in the master problem, where both solve very different linear models. One would expect that solving the master problem for the DW decomposition method would be much faster. In fact, let $r_1$ and $r_2$ represent the number of rows in the $A$ and $H$ matrices, respectively. The DW master problem has $r_2 + 1$ rows, compared to the $r_1 + r_2$ rows in the GCG master problem. However, this is only the case because of the way in which problem \eqref{prob:gen-ip} is presented. If the first system of constraints, $Az \leq b$, instead has form $Az = b$, the algorithm can be modified to preserve this property. This equality form version of GCG, which we call GCG-EQ, is presented in Appendix \ref{sec:cg-eq}.

The discussion above suggests that for the GCG algorithm to outperform the DW algorithm, it would have to do less iterations. It is natural to expect that this would happen. After all, given a common set of columns, the feasible region of the GCG master problem is considerably larger than the corresponding feasible region of the DW master problem. That is, the dual solutions obtained by solving the GCG master problem should be ``better'' than those produced by the DW master problem, somehow driving the pricing problem to find the right columns quickly throughout the iterative process. This improved performance, of course, would only compensate on problems in which the DW algorithm has a tough time converging, and could come at the cost of a worse upper bound to the underlying integer programming problem. In fact, this slow convergence rate is exactly what happens in General Production Scheduling Problems (GPSPs), and is what motivates the BZ algorithm in the first place.

As column generation algorithms, the size of the master problem will be increasing by one in each iteration, both for the GCG and DW. At some point the number of columns could grow so large that solving the master problem could become unmanageable. An interesting feature of the DW algorithm is that this can be practically managed by removing columns that are not strictly necessary in some iterations. The key is the following Lemma, which is a direct result of well-known linear programming theory (Bertsimas and Tsisiklis \cite{bertsimas97}).

\begin{lemma}Let $(\lambda^*, u^*)$ represent an optimal basic solution of problem
\begin{equation*}
\begin{array}{llll}
Z(V) =& \max \ & c' V \lambda + d' u & \\
& \mbox{s.t.} \ & 1 \cdot \lambda = 1 & \\
&& H  V \lambda + G u \leq h & \\
&& \lambda \geq 0. & 
\end{array}
\end{equation*}
Then, $|\{ i : \lambda_i^* \neq 0 \}| \leq r_2+1$, where $r_2$ is the number of rows in matrix $H$.
\end{lemma}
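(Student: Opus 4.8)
The plan is to reduce the restricted DW master problem to standard equality form and then invoke the classical fact that a basic feasible solution of a linear program in standard form has no more nonzero components than the program has equality constraints.

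First I would rewrite the feasible region in standard form. Introduce a nonnegative slack vector $s \in \mathbb{R}^{r_2}$ so that $H V \lambda + G u + s = h$, and replace the free vector $u$ by $u = u^{+} - u^{-}$ with $u^{+}, u^{-} \ge 0$. The resulting linear program has decision variables $(\lambda, u^{+}, u^{-}, s)$, all nonnegative, subject to exactly $r_2 + 1$ equality constraints: the single convexity constraint $1 \cdot \lambda = 1$ together with the $r_2$ equalities $H V \lambda + G u^{+} - G u^{-} + s = h$.

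Next I would argue that the given optimal basic solution $(\lambda^{*}, u^{*})$ corresponds to a basic feasible solution $(\lambda^{*}, (u^{*})^{+}, (u^{*})^{-}, s^{*})$ of this standard-form program, where $s^{*} = h - H V \lambda^{*} - G u^{*}$ and $u^{*}$ is written with the usual non-overlapping split. This is the routine equivalence between basic solutions of a linear program and basic solutions of its standard-form reformulation (Bertsimas and Tsitsiklis \cite{bertsimas97}). At such a basic solution every nonbasic variable is fixed at its lower bound $0$, and the number of basic variables is at most the number of equality constraints, namely $r_2 + 1$. Consequently, any index $i$ with $\lambda_i^{*} \neq 0$ forces $\lambda_i$ to be a basic variable, so $\{ i : \lambda_i^{*} \neq 0 \}$ injects into the set of basic variables and hence $|\{ i : \lambda_i^{*} \neq 0 \}| \le r_2 + 1$.

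I do not expect a genuine obstacle here: the statement is essentially a bookkeeping corollary of standard polyhedral theory. The only point requiring a little care is the treatment of the free variables $u$ — one must check that passing to $u^{+}, u^{-}$ does not create spurious nonzero components and that the reformulated solution is genuinely basic (equivalently, one could keep $u$ free and use the fact that free variables are always basic, which sharpens but does not change the count) — and this is entirely routine.
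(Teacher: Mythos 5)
Your proof is correct and is precisely the standard argument the paper invokes: the paper gives no proof of its own, stating only that the lemma ``is a direct result of well-known linear programming theory (Bertsimas and Tsitsiklis)'', and your reduction to standard equality form with $r_2+1$ equality constraints, followed by the observation that nonzero variables must be basic, is exactly that textbook result spelled out. The one point needing care --- that the split $u = u^{+} - u^{-}$ preserves basicness --- is handled adequately by your remark that free variables can instead be kept basic directly.
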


In fact, what this Lemma says is that after $m$ iterations, no matter how large $m$, we can always choose to remove all but $r_2 + 1$ columns, thus making the problem smaller. As a means of ensuring that the resulting column generation algorithm does not cycle, a reasonable technique would be to remove columns only after the primal bound in the DW algorithm changes. That is, only in those iterations $k$ such that $Z^L_k > Z^L_{k-1}$. 

It should be noted that a variant of this Lemma is also true for the GCG-EQ algorithm (see Appendix \ref{sec:cg-eq}). However, there is no similar result for the GCG algorithm. That is, while unused columns can be discarded in GCG, there is no guarantee that the columns remaining in the master problem will be few in number.

\subsection{The BZ algorithm}\label{ssec:bz}

The BZ algorithm, originally proposed by Bienstock and Zuckerberg \cite{bienstock09, bienstock10} is a variant of the GCG algorithm that is specialized for General Precedence Constrained Problems (GPCPs). That is, the BZ algorithm assumes that constraints $Az \leq b$ correspond to precedence constraints having form $z_i - z_j \leq 0$, for $(i,j) \in I$, and bound constraints, having form $0 \leq z \leq 1$. This assumption allows the BZ to exploit certain characteristics of the optimal master solutions in order to speed up convergence. As we will see, the BZ algorithm is very effective when the number of rows in $H$ and the number of $u$ variables is small relative to the number of $z$ variables. It should be noted, however, that the optimal value of the problem solved by the BZ algorithm will have value $Z^{BZ} = Z^{LP}$. That is, the bound will be no tighter than that of the LP relaxation. This follows directly from the fact that $\{ z : z_i \leq z_j \quad \forall (i,j) \in I \}$ defines a totally unimodular system. 

The BZ algorithm is exactly as the GCG algorithm, with the exception of two changes:

First, the BZ algorithm uses a very specific class of generator matrices $V^{k}$. These matrices have two main properties. The first is that the linear space spanned by $V^{k+1}$ will always contain $z^k$, the optimal solution of the master problem in the previous iteration. The second is that the columns of $V^{k}$ are orthogonal $0$-$1$ vectors. That is, for every column $v^q$ of matrix $V^k$ define $I^q = \left\{ i \in \{1, \ldots, n\} : v^q_i \not= 0 \right\}$ (note that $I^q$ describes the \emph{support} of~$v^q$, for $q = 1,\ldots,k$). Then, $0$-$1$ vectors $v^q$ and $v^r$ are orthogonal if and only if their supports are disjoint, i.e., $I^q\cap I^r = \emptyset$.

An intuitive explanation for why this would be desirable is that, when $V^k$ has orthogonal $0$-$1$ columns, problem \eqref{prob:gen-lb} is equivalent to
\begin{equation}\label{prob:gen-lb-2}
\begin{array}{llll}
v^* = & \max \ & c'z + d'u & \\
& \mbox{s.t.} \ & Az \leq b & \\
&& Hz + Gu \leq h & \\
&& z_i = z_j & \forall i,j \in I^q, \; \forall q = 1,\ldots,k.
\end{array}
\end{equation}

That is, restricting the original feasible region to the linear space spanned by $V^k$ is equivalent to equating the variables corresponding to the non-zero entries of each column in $V^k$. In a combinatorial optimization problem, this resembles a contraction operation, which is desirable because it can lead to a significant reduction of rows and variables, while yet preserving the original problem structure.

The matrix $V^k$ used in the BZ algorithm is obtained by the following recursive procedure. Let $\hat v$ be the $0$-$1$ vector obtained from solving the pricing problem. Let $[V^k,\hat v]$ be the matrix obtained by appending column $\hat v$ to $V^k$. We would like to compute a matrix $V^{k+1}$ comprised of orthogonal $0$-$1$ columns such that, first, $Span(V^{k+1}) \supseteq Span([V^k,\hat v])$; and second, such that $V^{k+1}$ does not have too many columns. This can be achieved as follows. For two vectors $x,y \in \{0,1\}^n$, define $x \land y \in \{0,1\}^n$ such that $(x \land y)_i = 1$ if and only if $x_i = y_i = 1$. Likewise, define $x \setminus y \in \{0,1\}^n$ such that $(x \setminus y)_i = 1$ if and only if $x_i = 1$ and $y_i = 0$.  Assume that $V^k$ is comprised of columns $\{v^1,\ldots,v^r\}$. Let $V^{k+1}$ be the matrix made of the non-zero vectors from the collection: \[ \{ v^j \land \hat v : 1 \leq j \leq r \} \cup \{ v^j \setminus \hat v : 1 \leq j \leq r \} \cup \left\{ \hat v \setminus  \left( \sum_{j=1}^{k} v^j \right) \right\}. \] We call this procedure of obtaining the matrix $V^{k+1}$ from $V^k$ and $\hat v$ the \emph{refining} procedure. Note that in each iteration $k$, the refining procedure will produce a matrix $V^{k+1}$ with at most $2r+1$ columns. 

The second difference between BZ and GCG is that it introduces a mechanism for preventing an unmanageable growth in the number of columns used in the master problem.

Consider any nonzero vector $z \in \bbbr^n$. 
Let $\lambda_1,\dots,\lambda_d$ denote the distinct non-zero values of the components of~$z$, and 
for $i=1,\dots,d$ let $v^i \in \bbbr^n$ denote the indicator vector of the components of~$z$ equal to $\lambda_i$,
that is, $v^i$ has components $v^i_j = 1$ if $z_j = \lambda_i$, and $0$ otherwise.
Note that $1 \leq d \leq n$ and $v^1,\dots,v^d$ are orthogonal 0-1 vectors.
Thus we can write $z = \sum_{i=1}^{d} \lambda_i v^i$ and we say that $v^1,\dots,v^d$ define the \emph{elementary basis} of~$z$.

If at some iteration the number of columns in matrix $V^k$ is too large, the BZ algorithm will replace matrix $V^k$ with a matrix whose columns make up an elementary basis of the incumbent master solution $z^k$. 

Again, there are two possible problems with this. On the one-hand, it is possible that such a coarsification procedure leads to cycling. On the other-hand, it might still be the case that after applying coarsification procedure the number of columns remains prohibitively large.

To alleviate the first concern, the BZ algorithm applies the coarsification procedure in exactly those iterations in which the generation of a new column leads to a strict improvement of the objective function. That is, when $Z^L_k > Z^L_{k-1}$. The second concern is alleviated by the fact that under the BZ algorithm assumptions, the elementary basis associated to a basic feasible solution of problem \eqref{prob:gen-lb-2} will always have at most $r_2$ elements, where $r_2$ is the number of rows in matrix $H$ (see Bienstock and Zuckerberg \cite{bienstock09} for the original proof, or Appendix \ref{sec:frac-bz} for the proof adapted to our notation and extra variables).

A formal description of the column generation algorithm that results from the previous discussion is summarized in Algorithm \ref{alg:gen}.

\begin{figure}[h!]
\begin{algorithm}[H]
\caption{The BZ Algorithm.}\label{alg:gen}
\KwIn{A feasible linear programming problem of form
\begin{equation}\label{alg:prob:gen}
\begin{array}{llll}
Z^* = & \max \ & c'z + d'u & \\
& \mbox{s.t.} \ & Az \leq b & \\
&& Hz + Gu \leq h & 
\end{array}
\end{equation}
where constraints $Az \leq b$ correspond to precedence constraints having form $z_i - z_j \leq 0$, for $(i,j) \in I$, and bound constraints, having form $0 \leq z \leq 1$. A matrix $V$ whose columns are orthogonal $0$-$1$ vectors spanning at least one feasible solution of \eqref{alg:prob:gen}.\\}
\KwOut{
An optimal solution $(z^*,u^*)$ to
problem (\ref{alg:prob:gen}).\\}
$j \leftarrow 1$ and $Z^U_0 \leftarrow \infty$.\\
$V^1 \leftarrow V$.\\
Solve the
\emph{restricted BZ
master problem},
\begin{equation}\label{alg:gen:ub}
\begin{array}{llll}
Z^L_j =& \max \ & c' V^j \lambda + d' u & \\
& \mbox{s.t.} \ & A  V^j \lambda \leq b & \\
&& H  V^j \lambda + G u \leq h &
\end{array}
\end{equation}
Let ($\lambda^j,u^j)$ be an optimal solution to
problem \eqref{alg:gen:ub}, and let $(z^j, u^j)$, be the corresponding feasible solution of \eqref{alg:prob:gen}, where $z^j =  V^j \lambda^j$. Let $\pi^j$ be
an optimal dual vector
corresponding to constraints $H  V^j \lambda + Gu \leq h$.\\
Solve the \emph{BZ pricing problem}. For this, consider the problem,
\begin{equation}\label{alg:gen:lb}
\begin{array}{llll}
L(\pi) = & \max \ & c' v  -
\pi'(Hv - h) & \\
& \mbox{s.t.} \ & Av \leq b,  & 
\end{array}
\end{equation}
and let $v^j$ represent an optimal solution of $L(\pi^j)$. Let $Z^U_j \leftarrow \min \{ L(\pi^j), Z^U_{j-1} \}$.\\
\If {$Z^L_j = Z^U_j$}
{ $z^* \leftarrow z^j$. Stop.}
\Else
{
Obtain $V^{j+1}$, a matrix of orthogonal $0$-$1$ columns, by refining $V^j$ with $v^j$.\\
$j \leftarrow j+1$. Go to step 3. \\
}
\end{algorithm}
\end{figure}

\subsection{BZ algorithm speedups}\label{sec:speedups}

In this section we describe a number of computational techniques for improving the performance of the BZ algorithm. The computational techniques that we present for speeding up the master problem are generic, and can be used more generally in the GCG algorithm. However, the speedups that we present for the pricing problem are specific for solving generalized production scheduling problems (GPSPs), and thus, are specific for the BZ algorithm.

\subsubsection{Speeding up the BZ pricing algorithm}\label{ssec:pricing}

The pricing problem consists of solving a problem of the form
\begin{equation}\label{eq:pricing}
\begin{array}{llll}
& \max \ & \bar{c}'z & \\
& \mbox{s.t.} \ & z_i \leq z_j  & \forall (i,j) \in I \\
&& 0 \leq z \leq 1 &
\end{array}
\end{equation}
for some objective function $\bar c$, and a set of arcs $I$. This class of problems are known as \emph{maximum closure} problems. In order to solve \eqref{eq:pricing}, we use the Pseudoflow algorithm of \cite{hochbaum08}.
We use the following two features to speed-up this algorithm in the present context.

{\bf Pricing hot-starts [PHS].} The pricing problem is solved once per iteration of the BZ algorithm, each time with a different objective function. An important feature of the Pseudoflow algorithm is that it can be hot-started. More precisely, the Pseudoflow algorithm uses an internal data structure called a \emph{normalized tree} that can be used to re-solve an instance of a maximum closure problem after changing the objective function vector. Rather than solve each pricing problem from scratch, we use the normalized tree obtained from the previous iteration to hot-start the algorithm. Because the changes in the objective function are not componentwise monotonous from iteration to iteration, we need to re-normalize the tree each time. For more information on the detailed working of the Pseudoflow algorithm, with indications on how to incorporate hot-starts, see \cite{hochbaum08}.

{\bf Path contractions [PC].} Consider problem \eqref{eq:pricing}, and define an associated directed acyclic graph $G = (\mV, \mE)$ as follows. For each variable $z_i$ define a vertex $v_i$. For each precedence constraint $z_i \leq z_j$ with $(i,j) \in I$, define a directed arc $(v_i, v_j)$ in $\mE$ . We say that a directed path $P =
(v(1), v(2), \ldots, v(k))$ in $G$ is \emph{contractible} if it is a maximal path in $G$ such that (i) $k\geq 3$, and (ii) every internal vertex $v(2),\ldots,v(k-1)$ has both in-degree and out-degree equal to one.

Observe that the variables associated to this path can only take $k+1$ different combinations of $0$-$1$ values: either (i) $z_{v(i)} = 0$ for $i=1,\ldots,k$ and the total contribution of the nodes in~$P$ is zero; or else, (ii) there exists an index $j\in\{1,\dots,k\}$ such that $z_{v(i)} = 0$ for all $i = 1,\ldots,j-1$ and $z_{v(i)} = 1$ for all $i = j,\ldots, k$.

Since \eqref{eq:pricing} is a maximization problem, in an optimal integer solution to \eqref{eq:pricing} the contribution of the path will either be $0$ (this will happen when $z_{v(k)} = 0$), or the contribution will be $\max_{1\leq j\leq k} \sum_{i=j}^k \bar{c}_{v(i)}$ (which will happen when $z_{v(k)} = 1$).

This suggests the following arc-contracting procedure.

Let $j(P,\bar{c}) = \textrm{argmax}_{1\leq j\leq k} \sum_{i=j}^k \bar{c}_{v(i)}$ and define a new graph $\hat{G} = (\hat{\mV}, \hat{\mE})$ from $G$ by eliminating the vertices $v(2),\ldots,v(k-1)$ from~$\mV$ and the arcs incident to them, and then adding an arc that connects $v(1)$ and $v(k)$. Define $\hat c$ with $\hat c_v = \bar c_v$ for all vertices $v \notin \{v_1, \ldots, v_k \}$, $\hat c_{v(k)} = \sum_{i=j(P,\bar{c})}^k \bar{c}_{v(i)}$ and $\hat c_{v(1)} = \sum_{i<j(P,\bar{c})} \bar{c}_{v(i)}$.

Solving the pricing problem in this smaller graph $\hat{G}$ with the objective $\hat{c}$
gives an optimal solution to the original problem on graph~$G$ with objective~$\bar{c}$, with the same optimal objective value.

This procedure can be used to contract all contractible paths in $G$ in order to obtain, in some cases, a significantly smaller graph. In fact, problem instances with multiple destinations and batch constraints induce many contractible paths in the pricing problem graph. This is illustrated with an example in Figure \ref{fig:contraction}.

\begin{figure}[!ht]
    \centering
    \subfloat[Before path contraction]{{\includegraphics[width=50mm,scale=0.5]{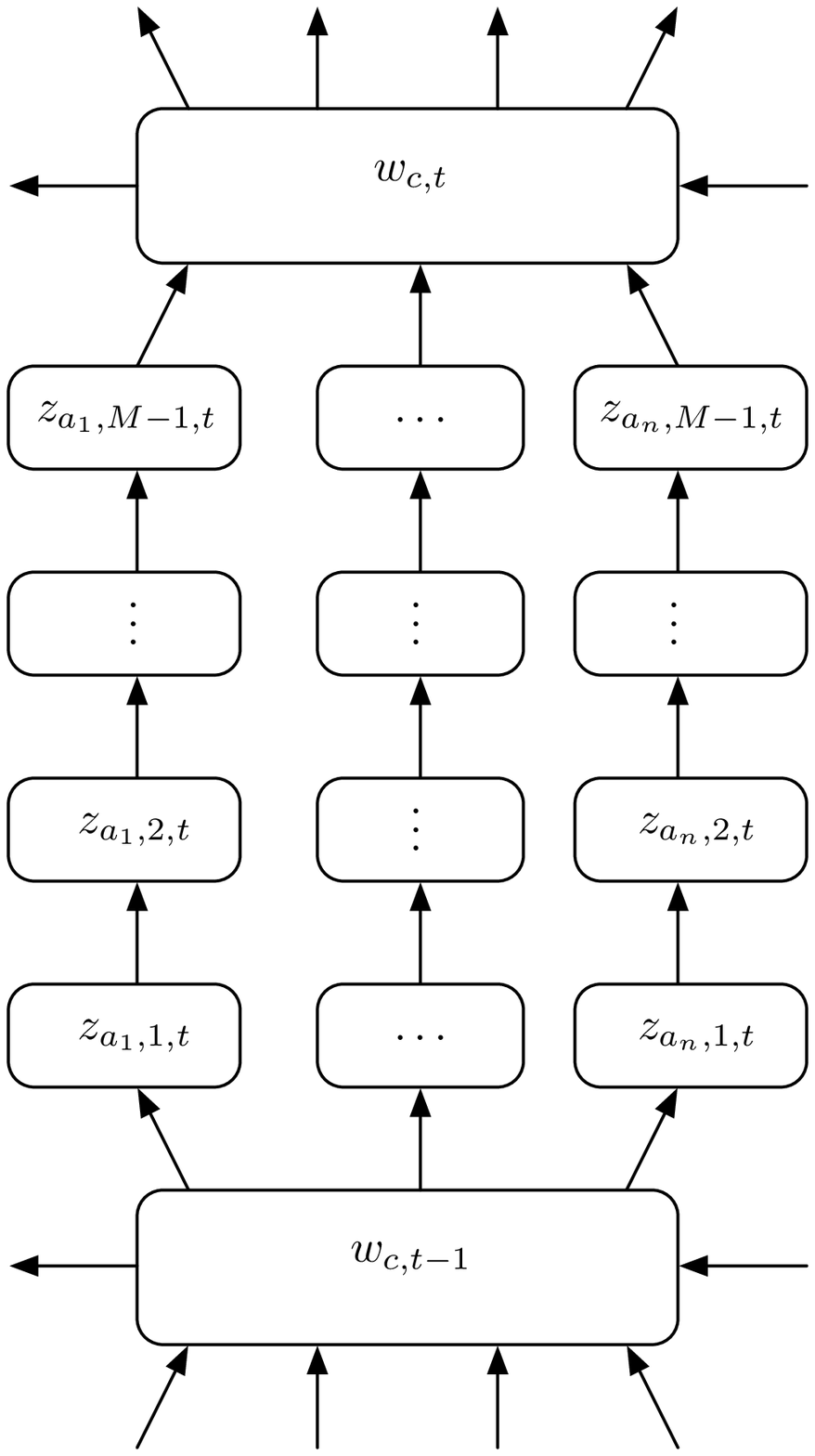}}}
    \qquad \qquad \qquad
    \subfloat[After path contraction]{{\includegraphics[width=50mm,scale=0.5]{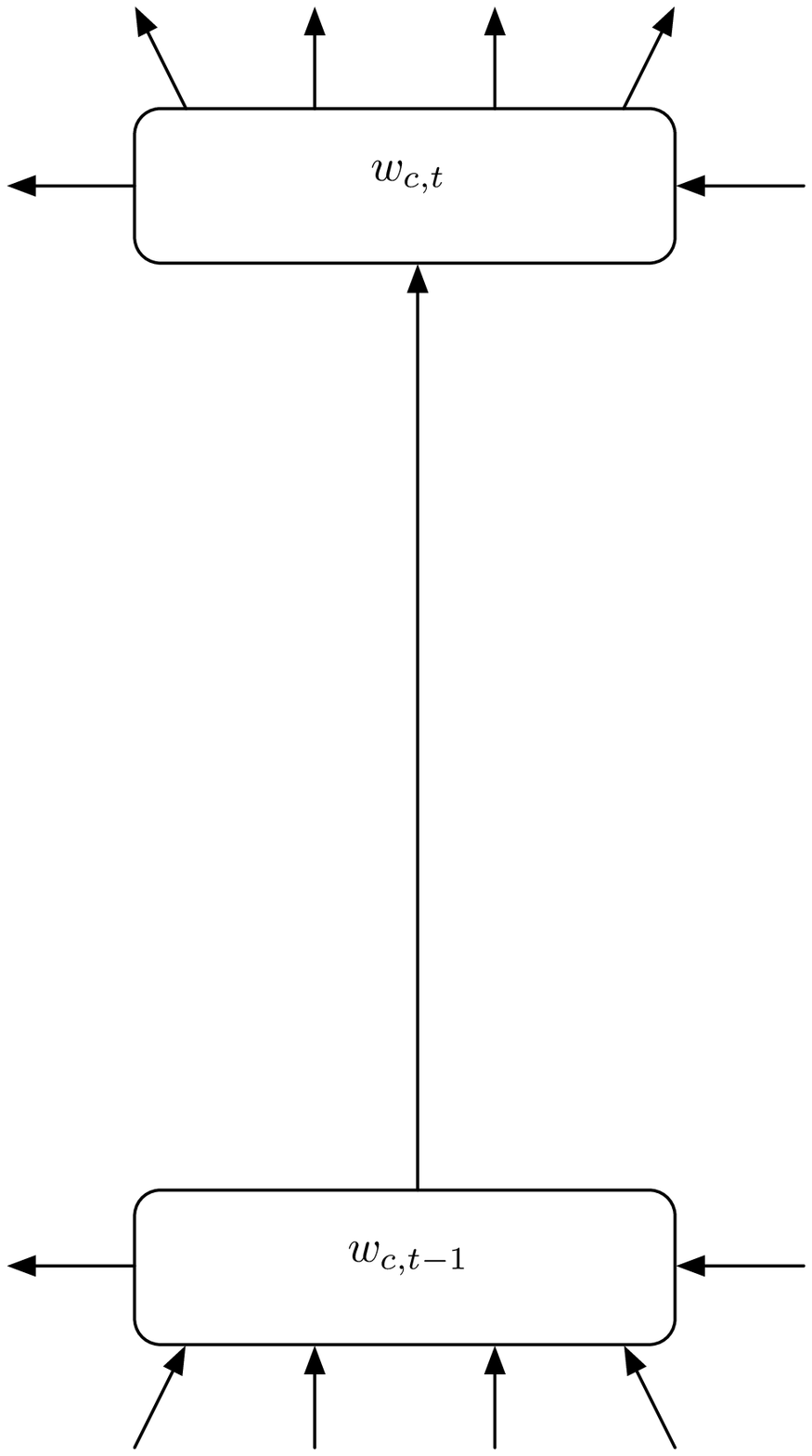}}}
    \caption{Example illustrating the possible effect of the path contraction speed-up. This example assumes a cluster $c$ comprised of activities $\{a_1,\ldots,a_n\}$. Each activity is assumed to have $M$ possible modes. The graph includes an arc between all pairs of variables for which there is a precedence relationship.}%
    \label{fig:contraction}%
\end{figure}

It should be noted that identifying and contracting paths only needs to be done in the first iteration. In all subsequent runs of the pricing problem, it is possible to use the same contracted graph, after updating the indices $j(P,\bar{c})$ and the objective function~$\hat c$.

\subsubsection{Speeding up the BZ master algorithm}\label{ssec:master}

The following ideas can be used to speed up solving the master problem \eqref{alg:gen:ub} in the BZ algorithm:

{\bf Starting columns [STCOL].} As input, the BZ algorithm requires an initial set of columns inducing a (possibly infeasible) solution of the problem. Such columns can be obtained by computing an elementary basis associated to a solution obtained with heuristics. 

If, when starting the algorithm, we do not have an initial set of columns describing a feasible  the problem, we can proceed as in the Phase-I simplex algorithm. For this, start with some arbitrary set of columns, and add ``artificial variables'' for each row. 

{\bf Master hot-starts [MHS].} Because of the refining procedure, the restricted BZ master problem \eqref{alg:gen:ub} in an iteration may be quite different from that in the previous iteration. To reduce its solve time, we can feed any simplex-based solver the solution from the previous iteration so that it can be used to attempt and identify a good feasible starting basis.

{\bf k-Step column management [k-Step].} In the original BZ algorithm, the coarsification procedure is applied in every iteration where there is a strict increase in the primal objective function value. Sometimes, however, it is better to keep the existing columns so as to improve the convergence rate. To avoid having too many columns in each iteration, we use what we refer to as a $k$-step column management rule. This rule, which requires as input an integer parameter $k$, works as follows: assume that we are at the $m$-th iteration of the BZ algorithm, where $m > k$. Further assume that in the previous iteration (iteration $m-1$) there was a strict increase in the primal objective function value. The column matrix $V^m$ used in the $m$-th iteration is built from scratch, by first obtaining an elementary basis associated to solution $z^{m-k}$, and then successively refining this basis with the vectors $v^{m-k+1}, v^{m-k+2}, \ldots, v^{m-1}$ (see Section \ref{ssec:bz}).

\section{Computational results}\label{sec:computations}

Our computational tests consider solving the linear relaxation of three different classes of problems. Specifically, we consider instances of OPPSP, OPPDP and RCPSP. The OPPSP and OPPDP instances are based on both real and hypothetical mine-planning instances obtained from industry partners, and from  the MineLib website \cite{espinoza13b}. In order to protect the confidentiality of the data sets obtained from our industry partners, we have renamed all of the instances using the names of Chilean volcanoes. In Table \ref{tab:instances} we present some characteristics of these instances. Note that for each instance we have a specification regarding the maximum number of time periods to consider. For each of the OPPSP problems we count with two versions: one with with less, and one with more clusters. We refer to these as the fine and course versions of each problem. These clusters correspond to what mining engineers refer to as a bench-phase, or increment (see \cite{hustrulid06} for formal definitions). We do not have cluster definition data for all of the problem instances, thus, the set of instances considered for OPPSP is a subset of the instances considered for OPPDP. The RCPSP instances that we consider are obtained from PSPlib \cite{kolisch97} (datasets \textit{j30}, \textit{j60}, \textit{j90} and \textit{j120}) and from \cite{debels2007decomposition} (dataset \textit{RG300}). It should be noted that these problems have a different objective function than the OPPDP and OPPSP problems. That is, these problems seek to minimize Makespan, rather than maximize net present value (NPV).

\begin{center}
\begin{table}[htbp]
\caption{Description of the different instances of the test set used in our computational study.\label{tab:instances}}
\begin{tabular}{|l|r|c|c|c||r|r|} \hline
Instance & Blocks (UPIT) &  Modes & Periods & Resources & \multicolumn{2}{c|}{Clusters}   \\ 
 &   &   &   &   &  Course  &  Fine \\ \hline
antuco             &  3\,525\,317  & 2 & 45 & 2 & --- & --- \\
calbuco            &    198\,248  & 3 & 21 & 1 & 324 & 548 \\
chaiten            &    287\,473  & 2 & 20 & 2 & 273 & 475 \\
dospuntas          &    897\,609  & 2 & 40 & 4 & --- & --- \\
guallatari         &     57\,958  & 3 & 21 & 3 & 272 & 460 \\
kd                 &     12\,154  & 2 & 12 & 1 &  53 &  93 \\
lomasblancas       &  1\,492\,024  & 3 & 45 & 3 & --- & --- \\
marvin             &      8\,516  & 2 & 20 & 2 &  56 &  98 \\
mclaughlin\_limit  &    110\,768  & 2 & 15 & 1 & 166 & 290 \\
palomo             &     97\,183  & 2 & 40 & 2 &  44 &  93 \\
ranokau            &    304\,977  & 2 & 81 & 2 & 186 & 296 \\
sm2                &     18\,388  & 2 & 30 & 2 & --- & --- \\
zuck\_large        &     96\,821  & 2 & 30 & 2 & --- & --- \\
zuck\_medium       &     27\,387  & 2 & 15 & 2 & --- & --- \\
zuck\_small        &      9\,399  & 2 & 20 & 2 & --- & --- \\
\hline
\end{tabular}
\end{table}
\end{center}

We evaluate the performance of the BZ algorithm, comparing it to the DW algorithm, and to a version of DW that incorporates the dual-stabilization techniques (DW-S) proposed by Pessoa et al. \cite{pessoa13}. For each of the algorithms we attempt to compute the linear relaxation solution of the test problems, stopping early if the relative difference between the master problem bound and the pricing problem bound is less than $10^{-6}$ (the default reduced cost tolerance used by the CPLEX simplex algorithm). We do not report the bounds obtained by each algorithm since they will all coincide with the value of the LP relaxation for each problem.

We also evaluate the performance of the speed-up techniques described in this paper. Specifically, we analyze the performance of Pricing hot-starts (PHS), Path contractions (PC), Starting columns (STCOL), Master hot-starts (MHS), and $k$-step column management ($k$-Step), with $k=10$. All of these speed-up techniques, with the exception of $k$-step (which is not applicable) are also implemented for the DW and DW-S algorithms. In fact, our code uses the exact same implementations of these speed-up techniques for the BZ, DW and DW-S algorithms. 

To assess the value of our proposed speed-up techniques we defined a \emph{default} set of speed-up-techniques to activate for each class of problems considered. For the OPPDP and OPPSP we defined the default speed-up-techniques to be PHS, MHS and PC. We found these to be the features that improved performance of the algorithm, without requiring a heuristic to generate a solution. For the RCPSP class of problems we use a different set of default speed-up options. Since we have to compute a feasible solution to each problem in order to define the number of time periods, we change the default settings so as to use this as a starting solution (STSOL). In addition, we observe that these problem instances have signficantly more time periods per activities when compared to the OPPSP and OPPDP instances. This resulted in a very different algorithm behavior that prompted us to include $k$-Step column management as a default option, as it improved problem performance. 

In order to assess the contribution of each individual speed-up technique in the algorithm, we turned each of these off (or on) to measure the impact from the change relative to that of the default settings. 

All algorithms were implemented using C programming language, using CPLEX\textsuperscript{\textregistered} 12.6 as optimization solver. The machines were running Linux 2.6.32 under x86\_64 architecture, with four eight-core Intel\textsuperscript{\textregistered} Xeon\textsuperscript{\textregistered} E5-2670 processors and with 128 Gb of RAM. For all results, we present normalized geometric means comparing the performance versus BZ algorithm under default configuration.

\subsection{Results for OPPDP}

Table \ref{tab:results_PD} shows the time required to solve each problem using the different proposed algorithms. The table also shows the number of iterations required by each algorithm, as well as the final number of columns generated by BZ. We do not report the bound generated by each relaxation, since they all generate the same bound for each instance.

\begin{center}
\begin{table}[htbp]
	\setlength{\tabcolsep}{5pt}
	\caption{Comparison between the different algorithms for OPPDP instances} \label{tab:results_PD}
\begin{tabular}{|l||r|r|r|r||r|r|r||r|} \hline
	\multirow{2}{*}{Instance} & \multicolumn{4}{c||}{Time (sec)} & \multicolumn{3}{c||}{Iterations}  &  \multirow{2}{*}{BZ cols} \\  \cline{2-8}
	& BZ & DW & DW+S & CPLEX & BZ & DW & DW+S  &  \\ \hline
	antuco            &208\,000 & 1\,542\,712 & 353\,313 &       - & 97 &2\,539 &   559 & 31\,640 \\
	calbuco          &  1\,480 &      6\,715 &   3\,602 &       - & 34 &   243 &   132 &  1\,876 \\ 
	chaiten            &  2\,620 &     35\,339 &   9\,625 &       - & 34 &   884 &   259 &  5\,800 \\ 
	dospuntas            & 36\,100 &    451\,515 & 108\,105 &       - & 52 &1\,542 &   385 &112\,943\\
	guallatari          &     143 &      1\,668 &      494 &       - & 30 &   444 &   158 &  2\,421 \\ 
	kd                &       7 &          22 &       16 & 15\,560 & 23 &   115 &    90 &     365 \\ 
	lomasblancas      & 86\,600 &    521\,279 & 208\,256 &       - & 75 &   773 &   360 &  5\,635 \\
	marvin            &       9 &          28 &       17 & 34\,601 & 27 &   154 &    99 &     206 \\ 
	mclaughlin\_limit &     167 &      1\,096 &      539 &       - & 27 &   208 &   102 &  1\,273 \\ 
	palomo          &  1\,290 &      9\,550 &   3\,637 &       - & 45 &   587 &   264 &  2\,089 \\ 
	ranokau           & 192\,000& 1\,526\,274 & 210\,506 &       - &108 &7\,261 &1\,025 &124\,406  \\
	sm2               &      44 &         256 &      109 &     254 & 52 &   874 &   281 &  3\,594 \\ 
	zuck\_large       &     634 &      3\,564 &   1\,761 &       - & 46 &   416 &   194 &  3\,195 \\ 
	zuck\_medium      &      40 &         112 &       93 &954\,405 & 28 &   107 &    82 &     229 \\ 
	zuck\_small       &      11 &          37 &       24 & 56\,165 & 29 &   150 &   113 &     294 \\ \hline
	Norm. G. Mean     &       1 &        5.98 &     2.42 &       - &  1 &   11.8&   4.9 &   \\ \hline
\end{tabular}
\end{table}
\end{center}

As can be seen, DW is nearly six times slower than BZ. Using stabilization techniques, the DW-S is able to signficantly reduced the time required to reach the optimality condition. In fact, stabilization techniques reduce the time of DW in a 60\%, but it is still $2.42$ times slower than BZ. This can be explained by the number of iterations required to converge by each algorithm. Even though a BZ iteration is, in average, $2.1$ times slower than a DW iteration, DW and DW+S require 11.8 and 4.9 times the number of iterations of BZ to converge. This is possible because the BZ algorithm can produce a large number of useful columns in few iterations. In fact, the number of columns generated by BZ is considerably larger than the number of columns produced by DW and DW-S (the number of columns for these algorithms is equal to the number of iterations). Finally, note that CPLEX is only able to solve the five smallest instances. 

In Table \ref{tab:results_PD_gaps} we show what happens when we change the tolerance used to define optimality in all of the algorithms. It can be seen that BZ algorithm still outperforms the DW and DW-S algorithms after reducing the target optimality gap to $10^{-4}$ and $10^{-2}$. However, the performance difference narrows as the target gap becomes smaller. The resulting times, normalized to the time of BZ under default setting, are presented in Table \ref{tab:results_PD_gaps}.

\begin{center}
\begin{table}[htbp]
	\caption{Normalized time required for different optimality gaps for OPPDP instances} \label{tab:results_PD_gaps}
\begin{tabular}{|l||r|r|r||r|r|r||r|r|r|} \hline
	\multirow{2}{*}{Instance} & \multicolumn{3}{c||}{$10^{-6}$} & \multicolumn{3}{c||}{$10^{-4}$}  &  \multicolumn{3}{c|}{$10^{-2}$} \\  \cline{2-10}
	& BZ & DW & DW+S  & BZ & DW & DW+S  & BZ & DW & DW+S \\ \hline
	antuco            & 1 & 7.42 & 1.70 & 0.96 & 6.28 & 1.49 & 0.93 & 4.63 & 1.20 \\
	calbuco          & 1 & 4.54 & 2.43 & 0.92 & 3.49 & 1.94 & 0.74 & 1.55 & 1.08 \\ 
	chaiten            & 1 &13.49 & 3.67 & 0.95 &10.53 & 2.92 & 0.79 & 4.30 & 1.57 \\ 
	dospuntas            & 1 &12.51 & 2.99 & 0.83 & 7.77 & 2.16 & 0.67 & 2.40 & 1.15 \\
	guallatari          & 1 &11.66 & 3.45 & 0.86 & 7.35 & 2.44 & 0.70 & 2.60 & 1.37 \\ 
	kd                & 1 & 3.05 & 2.25 & 0.90 & 2.40 & 1.78 & 0.76 & 1.13 & 1.18 \\ 
	lomasblancas      & 1 & 6.02 & 2.40 & 0.95 & 5.04 & 2.05 & 0.87 & 3.26 & 1.38 \\
	marvin            & 1 & 3.19 & 1.99 & 0.91 & 2.26 & 1.47 & 0.81 & 0.94 & 0.81 \\ 
	mclaughlin\_limit & 1 & 6.56 & 3.23 & 0.86 & 4.75 & 2.47 & 0.68 & 1.89 & 1.47 \\ 
	palomo          & 1 & 7.40 & 2.82 & 0.98 & 6.10 & 2.33 & 0.88 & 2.69 & 1.50 \\ 
	ranokau           & 1 & 7.95 & 1.10 & 0.82 & 4.79 & 0.86 & 0.63 & 1.87 & 0.50 \\
	sm2               & 1 & 5.88 & 2.50 & 0.95 & 3.77 & 1.89 & 0.88 & 2.03 & 1.35 \\ 
	zuck\_large       & 1 & 5.62 & 2.78 & 0.96 & 4.42 & 2.28 & 0.84 & 2.21 & 1.45 \\ 
	zuck\_medium      & 1 & 2.80 & 2.31 & 0.87 & 2.15 & 1.78 & 0.66 & 1.23 & 0.97 \\ 
	zuck\_small       & 1 & 3.22 & 2.13 & 0.86 & 2.20 & 1.52 & 0.77 & 1.24 & 0.85 \\ \hline
	Norm. G. Mean     & 1 & 5.98 & 2.42 & 0.90 & 4.36 & 1.89 & 0.77 & 2.04 & 1.14 \\ \hline
\end{tabular}
\end{table}
\end{center}

Finally, we study the impact of the different speed-up techniques on BZ. From the default settings, we disable the speed-ups PHS, MHS and PC, one-by-one, and also, we disable all three of them together. We also run BZ under our default setting after adding a starting column (STCOL) generated using the Critical Multiplier algorithm (See \cite{chicoisne12}), and after enabling the k-Step feature. The resulting times, normalized to the time of BZ under default setting, are presented in Table \ref{tab:results_DP_features}.

\begin{center}
\begin{table}[htbp]
	\caption{Normalized time required for BZ algorithm with/without features for OPPDP instances} \label{tab:results_DP_features}
	\begin{tabular}{|l||c|c|c|c|c|c|c|} \hline
	Instance & Default&No & No & No  &No PHS/PC& Default & Default   \\
           &        &PHS& PC & MHS &MHS      &+ STCOL  & + k-Step\\ \hline
	antuco            & 1 & 1.57 & 1.60 & 0.98 & 2.50 & 0.57 & 2.56 \\
	calbuco           & 1 & 1.48 & 1.77 & 1.10 & 2.78 & 0.85 & 1.09 \\
	chaiten           & 1 & 1.38 & 1.49 & 1.01 & 1.83 & 0.62 & 1.20 \\
	dospuntas         & 1 & 1.36 & 1.57 & 1.03 & 1.75 & 0.73 & 1.52 \\
	guallatari        & 1 & 1.47 & 2.18 & 1.21 & 2.88 & 1.00 & 0.99 \\
	kd                & 1 & 1.30 & 1.63 & 1.00 & 2.22 & 0.69 & 1.51 \\
	lomasblancas      & 1 & 1.84 & 1.68 & 1.01 & 2.94 & 0.25 & 1.98 \\
	marvin            & 1 & 1.32 & 1.27 & 0.97 & 2.47 & 0.91 & 1.49 \\
	mclaughlin\_limit & 1 & 1.27 & 1.78 & 0.94 & 2.06 & 0.77 & 1.20 \\
	palomo            & 1 & 1.60 & 1.47 & 0.98 & 2.21 & 0.94 & 1.53 \\
	ranokau           & 1 & 0.93 & 1.04 & 0.97 & 1.15 & 0.22 & 0.57 \\
	sm2               & 1 & 1.33 & 1.93 & 1.16 & 2.35 & 1.11 & 2.06 \\ 
	zuck\_large       & 1 & 1.15 & 1.60 & 1.00 & 2.08 & 0.62 & 1.31 \\
	zuck\_medium      & 1 & 1.75 & 1.59 & 1.20 & 2.74 & 0.84 & 1.13 \\
	zuck\_small       & 1 & 1.32 & 1.72 & 0.95 & 2.41 & 0.92 & 1.27 \\ \hline
	Norm. G. Mean.    & 1 & 1.39 & 1.60 & 1.03 & 2.24 & 0.68 & 1.35 \\ \hline
\end{tabular}
\end{table}
\end{center}

We can see that all speeding features provide, in average, an improvement on the time required by BZ to converge. However, the individual performance of each feature differs instance by instance. The most important speed-up technique is path contraction (PC), as disabling this feature increases the time required to converge by 60\%. Since the reduction in number of variables of this speed-up in OPPDP is proportional to the number of modes of the problem, this feature is particularly important for the Guallatari instance, which has 3 different modes. Disabling these three features, the total time required to converge more than doubles. On the other hand, if we start with a preliminary set of starting columns, the time required is decreased by 32\%.  Finally, we see that k-Step does not improve the convergence time, making the algorithm run 35\% slower. In fact, it makes every problem converge slower, with the exception of the ranokau instance.

\subsection{Results for OPPSP}

For these instances we use the same default settings as those used for OPPDP. We note that these problems are considerably smaller than the OPPSP problems. This is both in the number of variables and precedence constraints. All algorithms are able to solve these problems in a few hours, with the exception of the \emph{ranokau} instance, which CPLEX fails to due to the memory limit (128Gb). We present the results in Table \ref{tab:results_PS}.

\begin{center}
\begin{table}[htbp]
	\caption{Comparison between the different algorithms for OPPSP instances} \label{tab:results_PS}
\begin{tabular}{|l||r|r|r|r||r|r|r||r|} \hline
	\multirow{2}{*}{Instance} & \multicolumn{4}{c||}{Time (sec)} & \multicolumn{3}{c||}{Iterations}  &  \multirow{2}{*}{BZ cols} \\  \cline{2-8}
	& BZ & DW & DW+S & CPLEX & BZ & DW & DW+S  &  \\ \hline
	calbuco           &    88 &      90 &    122 & 56\,102 & 32 &    96 &  95 &   167 \\ 
	chaiten           &    86 &     174 &    184 &  4\,595 & 34 &   189 & 149 &   392 \\ 
	guallatari        &    29 &      39 &     50 & 11\,757 & 30 &   117 & 102 &   188 \\ 
	kd                &     1 &       1 &      1 &       5 & 18 &    44 &  40 &    70 \\ 
	marvin            &     2 &       2 &      2 &      17 & 27 &    70 &  73 &    82 \\ 
	mclaughlin\_limit &    14 &      19 &     22 &  2\,310 & 25 &    78 &  73 &   100 \\ 
	palomo            &    49 &      59 &     95 &     429 & 44 &   138 & 137 &   160 \\ 
	ranokau           &2\,034 &  5\,102 &   4084 &       - &186 &1\,461 & 781 &2\,108\\ \hline
	Norm. G. Mean     &     1 &    1.38 &   1.63 &   54.57 &  1 &  3.64 &3.17 &   \\ \hline
\end{tabular}
\end{table}
\end{center}

We can see that in this class of problems the performance of DW and DW-S is more similar to that of BZ. This is probably explained by the fact that the number of iterations required by DW and DW-S is much smaller. Note also that stabilization techniques for DW only marginally reduce the number of iterations required by DW to converge, resulting in that DW-S is 18\% slower than DW.

\begin{center}
\begin{table}[htbp]
	\caption{Normalized time required for BZ algorithm with/without features for OPPSP instances} \label{tab:results_PS_features}
	\begin{tabular}{|l||c|c|c|c|c|c|} \hline
	Instance & Default&No & No & No  & Default & Default   \\
           &        &PHS& PC & MHS &+ STCOL  & + k-Step\\ \hline
	calbuco           & 1 & 0.92 & 9.96 & 1.03 & 0.82 & 2.10 \\
	chaiten           & 1 & 0.95 &14.63 & 1.04 & 0.81 & 2.06 \\
	guallatari        & 1 & 1.01 & 4.91 & 1.05 & 0.73 & 1.77 \\
	kd                & 1 & 1.04 & 5.63 & 1.05 & 0.65 & 1.59 \\
	marvin            & 1 & 0.90 & 3.51 & 0.92 & 0.68 & 1.26 \\
	mclaughlin\_limit & 1 & 0.99 &14.65 & 0.93 & 0.98 & 1.91 \\
	palomo            & 1 & 0.87 &13.33 & 0.97 & 0.77 & 1.83 \\
	ranokau           & 1 & 1.03 &19.48 & 1.03 & 0.78 & 2.37 \\ \hline
	Norm. G. Mean     & 1 & 0.96 & 9.25 & 1.00 & 0.77 & 1.83 \\ \hline
\end{tabular}
\end{table}
\end{center}

Comparing the impact of the different features on BZ (see Table \ref{tab:results_PS_features}), we see that the most important feature is, again, path contraction (PC). Disabling this feauture makes the algorithm run almost $10$ times slower. Similarly to the OPPDP problems, providing starting columns to BZ reduces the time by $23\%$, and introducing $k$-Step column management makes the problem run slower ($83\%$). 

\subsection{Results for RCPSP instances}

In order to formulate the RCPSP instances we need a maximum number of time periods to consider. Since the objective of these problems is to minimize Makespan, the number of time periods should be an upper bound on the number of periods required to schedule all of the activities. Such a bound can be computed by running any heuristic to compute any feasible integer solution. For this purpose, we use a greedy TOPOSORT heuristic~\cite{chicoisne12}, which takes fractions of a second to solve for all of our instances. 

Table \ref{tab:results_RCPSP} describes the performance of the different algorithms on our RCPSP test instances. Note that we only consider instances that are solved by CPLEX in more than 1 second, obtaining a total of 1575 instances. The running times presented in the table are geometric means over instances in the same dataset.

\begin{center}
\begin{table}[htbp]
	\caption{Comparison between the different algorithms for RCPSP instances} \label{tab:results_RCPSP}
\begin{tabular}{|lr||r|r|r|r||r|r|r|} \hline
&	& \multicolumn{4}{c||}{Time (sec)} & \multicolumn{3}{c|}{Iterations} \\ \cline{3-9}
\multicolumn{2}{|l||}{Dataset \textdagger} & BZ & DW & DW+S & CPLEX & BZ & DW & DW+S  \\ \hline
j30  & (51 inst.) & 1.23 & 4.71 & 1.91 &  1.61 & 141.4 & 650.4 & 312.9 \\
j60  &(152 inst.) & 0.98 & 8.19 & 2.55 &  3.77 &  68.5 & 477.0 & 232.1 \\
j90  &(293 inst.) & 0.54 & 2.27 & 1.07 &  5.33 &  23.3 & 115.4 &  73.4 \\
j120 &(599 inst.) & 3.95 &33.13 &11.94 & 31.78 &  58.2 & 440.5 & 230.5 \\
RG300&(480 inst.) &22.86 &43.76  &24.87 & 240.51\textsuperscript{\textasteriskcentered}&  91.1 & 393.9 & 260.5 \\ \hline
\multicolumn{2}{|l||}{Norm. G. Mean}     & 1    & 4.76 & 2.00 &  7.25 &   1   &   5.8 &   3.3 \\ \hline
\end{tabular}\\
\begin{footnotesize}
	\textdagger: We only consider instances which took CPLEX more than a second to solve.\\
	\textasteriskcentered: We only consider the 438 instances which were solved within 48 hours.\\
\end{footnotesize}
\end{table}
\end{center}

Table \ref{tab:results_RCPSP} shows that BZ is again faster than the other algorithms when solving RCPSP instances. In fact, it is 2 times faster than DW+S and 4.7 faster than DW.  Note that this difference is particularly large for the \emph{j120} instances from PSPLIB repository, where DW and DW+S run 8.4 and 3 times slower, respectively. It would seem that the performance of these algorithms is greatly dependent on problem structure. This can be seen when considering the \emph{RG300} instances, which are generated in a different way. In these instances, DW with stabilization is only marginally slower than BZ. 

Table \ref{tab:results_RCPSP_features} shows how much the performance of the BZ algorithm is affected by turning off each of the default speed up features. It is interesting to note that on RCPSP instances the $k$-Step column management rule is very important. Turning it off makes the problem run, in average, $2.51$ times slower. This is in stark contrast to what happens with the OPPSP and OPPDP instances, where activating the $k$-Step feature actually makes the problem run slower. We speculate that this is due to the fact that the RCPSP problems have a significantly greater number of time periods per activity than the OPPSP and OPPDP instances. This results in a problem with significantly more resource consumption constraints per variable. It is also interesting to note that disabling the PC and MHS features actually makes BZ run faster in the RCPSP instances. We speculate that the MHS feature does not improve performance because, having enabled the $k$-Step feature as well, succesive master problem formulations significantly differ from each other. We speculate that the PC feature does not improve performance because in RCPSP instances there are not as many paths to contract due to the structure of the precedence graphs. This is due to the fact that there are no clusters and that there is just a single mode per activity.

\begin{center}
\begin{table}[htbp]
	\caption{Normalized time required for BZ algorithm with/without features for RCPSP instances} \label{tab:results_RCPSP_features}
	\begin{tabular}{|lr||c|c|c|c|c|c|} \hline
	     &       & Default & No   & No   & No   & No   & No     \\
 \multicolumn{2}{|l||}{Dataset \textdagger}  & & PHS  & PC   & MHS  &k-Step& STCOL \\ \hline
	j30  & (51 inst.) & 1      & 1.30 & 0.78 & 0.80 & 0.74 & 1.44 \\ 
	j60  &(152 inst.) & 1      & 1.53 & 0.99 & 1.03 & 1.44 & 1.91 \\ 
	j90  &(293 inst.) & 1      & 1.15 & 0.86 & 0.91 & 1.51 & 1.80 \\ 
	j120 &(599 inst.) & 1      & 1.40 & 0.92 & 0.95 & 2.39 & 1.23 \\ 
	RG300&(453 inst.) & 1      & 1.22 & 0.87 & 0.89 & 5.78 & 1.04 \\ \hline
	\multicolumn{2}{|l||}{Norm. G. Mean} & 1      & 1.30 & 0.90 & 0.93 & 2.51 & 1.33 \\ \hline
	\end{tabular}\\
	\begin{footnotesize}
	\textdagger: We only consider instances for which the BZ algorithm finished in 48 hours in all settings.\\
\end{footnotesize}
\
\end{table}
\end{center}

\subsection{Concluding remarks}

We summarize with three important conclusions. First, the BZ algorithm significantly outperforms DW and DW+S on all GPCP problem classes. Second, the speed-up features proposed in this article significantly improve the performance of the BZ algorithm. Third, the algorithm and speed-up performances greatly depend on the problem classes that are being considered. These conclusions suggest that the BZ algorithm with the proposed speed-ups is a technique that can be used to effectively be used to compute the LP relaxation solution of different classes of scheduling problems. Such an algorithm might be useful as a means to provide upper bounds for scheduling problems, or as a part of the many rounding heuristics that have recently been proposed, or eventually, as part of a branch-and-bound solver. In addition, they suggest that the BZ algorithm's potential use for other classes of problems should also be further studied.

\bibliographystyle{spmpsci}      
\bibliography{orbib}

\newpage
\appendix
\noindent {\Large \bf Appendix}

\section{Resource constraints in GPSP reformulation}\label{sec:resources}

In this section we provide a detailed proof on how to derive \eqref{psp2:resources} from \eqref{eq:psp:resources}. This is needed in order to reformulate an instance of a General Production Scheduling Problem (GPSP), described in Section \ref{sec:formulation}, as an instance of a General Precedence Constrained Problem (GPCP), presented in Section \ref{sec:reformulation}. For the benefit of the reader, we recall \eqref{psp2:resources} and \eqref{eq:psp:resources}.

\begin{proposition}
Consider inequality \eqref{eq:psp:resources}
\begin{equation*}
\sum\limits_{a \in \mA} \sum\limits_{m \in \mM_a}  q_{r,a,m} \sum\limits_{s=\max\{1,t-d_{a,m}+1\}}^{t} y_{a,m,s} \leq Q_{r,t},
\end{equation*}
as defined in Section \ref{sec:formulation}. If we apply the variable substitution scheme introduced in Section \ref{sec:reformulation}:
\begin{align*}
y_{a,m,t} & = z_{a,m,t} - z_{a,m-1,t} & \qquad \forall a \in \mA, \; m = 2, \ldots, M_a, \; t \in \mT, \\
y_{a,1,t} & = z_{a,1,t} - z_{a,M_a,t-1} & \qquad \forall a \in \mA, \; t = 2, \ldots, T, \\
y_{a,1,1} & = z_{a,1,1} & \qquad \forall a \in \mA,
\end{align*}
we obtain inequality \eqref{psp2:resources}:  
\begin{equation*}
\sum\limits_{a \in \mA} \sum\limits_{m \in \mM_a} \sum\limits_{s\in \mT } \tilde{q}^{r,t}_{a,m,s} z_{a,m,s} \leq Q_{r,t},
\end{equation*}
\noindent where \[ \tilde{q}^{r,t}_{a,m,s} = \left\{ 
\begin{array}{ll} 
q_{r,a,m} \mathbf{1}_{[t-d_{a,m}+1,t]}(s)  - q_{r,a,m+1}  \mathbf{1}_{[t-d_{a,m+1}+1,t]}(s) & \text{ if }  m < M_a \\
q_{r,a,M_a} \mathbf{1}_{[t-d_{a,M_a}+1,t]}(s) - q_{r,a,1} \mathbf{1}_{[t-d_{a,1},t-1]}(s) & \text{ if } m = M_a \end{array} \right.\]
\end{proposition}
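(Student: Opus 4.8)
The plan is to fix a resource $r \in \mR$ and a period $t \in \mT$ and to verify the claimed identity coefficient-by-coefficient. Since both sides split as sums over activities $a \in \mA$ with no cross terms between distinct activities, it suffices to show, for each fixed $a \in \mA$, that the substitution transforms $\sum_{m\in\mM_a} q_{r,a,m}\sum_{s=\max\{1,t-d_{a,m}+1\}}^{t} y_{a,m,s}$ into $\sum_{m\in\mM_a}\sum_{s\in\mT}\tilde q^{r,t}_{a,m,s}\,z_{a,m,s}$.

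First I would rewrite each truncated sum using indicator notation: since $s\ge 1$ always, $\sum_{s=\max\{1,t-d_{a,m}+1\}}^{t} y_{a,m,s} = \sum_{s\in\mT}\mathbf{1}_{[t-d_{a,m}+1,t]}(s)\,y_{a,m,s}$, so the contribution of activity $a$ to the left-hand side is $\sum_{m\in\mM_a}\sum_{s\in\mT} q_{r,a,m}\,\mathbf{1}_{[t-d_{a,m}+1,t]}(s)\,y_{a,m,s}$. Next I would substitute $y_{a,m,s}=z_{a,m,s}-z_{a,m-1,s}$ for $m\ge 2$, and $y_{a,1,s}=z_{a,1,s}-z_{a,M_a,s-1}$ for $s\ge 2$ together with $y_{a,1,1}=z_{a,1,1}$; the last two can be handled uniformly under the convention $z_{a,M_a,0}=0$. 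Then I would collect the coefficient of a generic variable $z_{a,m,s}$.

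For $m<M_a$, the variable $z_{a,m,s}$ gets a $+q_{r,a,m}\,\mathbf{1}_{[t-d_{a,m}+1,t]}(s)$ term from the mode-$m$ summand and a $-q_{r,a,m+1}\,\mathbf{1}_{[t-d_{a,m+1}+1,t]}(s)$ term from the mode-$(m{+}1)$ summand (where it appears as $-z_{a,(m+1)-1,s}$); their sum is exactly $\tilde q^{r,t}_{a,m,s}$ in the case $m<M_a$. For $m=M_a$, the variable $z_{a,M_a,s}$ gets $+q_{r,a,M_a}\,\mathbf{1}_{[t-d_{a,M_a}+1,t]}(s)$ from the mode-$M_a$ summand and $-q_{r,a,1}\,\mathbf{1}_{[t-d_{a,1}+1,t]}(s+1)$ from the mode-$1$ summand (where it appears as $-z_{a,M_a,(s+1)-1}$, with $s+1\ge 2$). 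The one nonroutine point is the index-shift identity $\mathbf{1}_{[t-d_{a,1}+1,t]}(s+1)=\mathbf{1}_{[t-d_{a,1},\,t-1]}(s)$, which follows from $t-d_{a,1}+1\le s+1\le t \iff t-d_{a,1}\le s\le t-1$; applying it yields precisely $\tilde q^{r,t}_{a,M_a,s}$. Summing the per-activity identities over $a\in\mA$ and re-appending $\le Q_{r,t}$ gives \eqref{psp2:resources}.

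I expect the main obstacle to be organizational rather than mathematical: tracking the ``wrap-around'' contribution of $y_{a,1,s}=z_{a,1,s}-z_{a,M_a,s-1}$ — in particular that it feeds into the coefficient of $z_{a,M_a,\cdot}$ at a shifted time index — and checking that the boundary situations ($t$ or $s$ near $1$, the case $d_{a,m}>t$, and the absent $z_{a,M_a,0}$) are all absorbed correctly by the indicator notation and the convention $z_{a,M_a,0}=0$. Once this bookkeeping is in place, every step is an elementary manipulation of finite sums.
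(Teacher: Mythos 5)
Your proposal is correct and follows essentially the same route as the paper's proof: substitute the $y$-to-$z$ mapping, shift the index in the wrap-around term $-z_{a,M_a,s-1}$ (which is exactly where the interval $[t-d_{a,1},t-1]$ comes from), and re-index $m-1\mapsto m$ to read off the coefficients $\tilde q^{r,t}_{a,m,s}$. The only cosmetic difference is that you collect coefficients of each $z_{a,m,s}$ using indicator notation throughout, whereas the paper carries out the same manipulation as a chain of equalities on the truncated sums; both handle the boundary cases ($s=1$, $m=M_a$) identically.
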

\begin{proof}
\begin{eqnarray*}\label{eq:psp:resources_bbb}
Q_{r,t} &\geq & \sum\limits_{a \in \mA} \sum\limits_{m \in \mM_a}  q_{r,a,m} \sum\limits_{s=\max\{1,t - d_{a,1} + 1\}}^{t} y_{a,m,s}\\
&=& \sum\limits_{a \in \mA} \left(  \sum\limits_{m=2}^{M_a}   \sum\limits_{\ s=\max\{1,t - d_{a,m} + 1\}}^{t} q_{r,a,m} y_{a,m,s} +   \sum\limits_{s=\max\{1,t - d_{a,1} + 1\}}^{t} q_{r,a,1} y_{a,1,s} \right)\\
&=& \sum\limits_{a \in \mA} \left(  \sum\limits_{m=2}^{M_a}   \sum\limits_{\ s=\max\{1,t - d_{a,m} + 1\}}^{t} q_{r,a,m} (z_{a,m,s}-z_{a,m-1,s}) \right. \\
&& \left. +   \sum\limits_{s=\max\{1,t - d_{a,1} + 1\}}^{t} q_{r,a,1} z_{a,1,s} - \sum\limits_{s=\max\{2,t - d_{a,1} + 1\}}^{t} q_{r,a,1} z_{a,M_a,s-1} \right)\\
&=& \sum\limits_{a \in \mA} \left(  \sum\limits_{m=2}^{M_a}   \sum\limits_{\ s=\max\{1,t - d_{a,m} + 1\}}^{t} q_{r,a,m} z_{a,m,s}  - \sum\limits_{m=1}^{M_a-1}   \sum\limits_{\ s=\max\{1,t - d_{a,m+1} + 1\}}^{t} q_{r,a,m+1} z_{a,m,s} \right. \\
&& \left. + \sum\limits_{s=\max\{1,t - d_{a,1} + 1\}}^{t} q_{r,a,1} z_{a,1,s} - \sum\limits_{s=\max\{1,t - d_{a,1}\}}^{t-1} q_{r,a,1} z_{a,M_a,s} \right)\\
&=& \sum\limits_{a \in \mA} \left(  \sum\limits_{m=1}^{M_a}   \sum\limits_{\ s=\max\{1,t - d_{a,m} + 1\}}^{t} q_{r,a,m} z_{a,m,s} - \sum\limits_{m=1}^{M_a-1}   \sum\limits_{\ s=\max\{1,t - d_{a,m+1} + 1\}}^{t} q_{r,a,m+1} z_{a,m,s}  \right.\\
&& \left.  -  \sum\limits_{s=\max\{1,t - d_{a,1}\}}^{t-1} q_{r,a,1} z_{a,M_a,s} \right). \\
\end{eqnarray*}
\end{proof}

\section{Equality form version of the GCG algorithm}\label{sec:cg-eq}

In this section we present the GCG algorithm when, instad of the form \eqref{prob:gen-ip}, the mixed integer problem to be solved has form,
\begin{equation}\label{prob:gen-ip-eq}
\begin{array}{rl}
Z^{IP} = \max \ \; & c'z + d'u \\
\mbox{s.t.} \ \; & Az = b, \\
& Hz + Gu \leq h,  \\
& z \in \{0,1\}^n.
\end{array}
\end{equation}
That is, when the first set of inequalities have been replaced with equalities. As before, define
\[ P = \{ z : Az = b, \; z \in \{0,1\} \}. \]

\noindent We present a column generation algorithm for computing the optimal solution of
\begin{equation}\label{prob:lin-eq}
\begin{array}{rl}
Z^{LIN} = \max \ \; & c'z + d'u \\
\mbox{s.t.} \ \;& Az = b, \\
& Hz + Gu \leq h, \\
& z \in \textrm{\lhull}(P).
\end{array}
\end{equation}

Let $v^o$ be such that $Av^o = b$. For each $k \geq 1$ let $V^k$ be a matrix comprised of linearly independent points $\{v^1,\ldots,v^k\}$ such that $Av^i = 0$ for $i=1,\ldots,k$. Define,

\begin{equation}\label{prob:gen-lb-eq}
\begin{array}{llll}
Z(V) = & \max \ & c'v^o + c'V \lambda + d'u & \\
& \mbox{s.t.} \ & H V \lambda + G u \leq h - H v^o & (\pi)
\end{array}
\end{equation}

Begin each iteration by solving problem \eqref{prob:gen-lb-eq} with $V = V^k$. Define $Z^L_k = Z(V^k)$ and let $(\lambda^k, u^k)$ and $\pi^k$ be the corresponding optimal primal and dual solutions respectively. Define $z^k = v^o  + V^k \lambda^k$ and let,

\begin{equation}\label{prob:gen-ub-eq}
\begin{array}{llll}
L(\pi) = & \max \ & c' v - \pi(Hv - h) & \\
& \mbox{s.t.} \ & Av = b, & \\
&& v \in \{0,1\}^n. &
\end{array}
\end{equation}

Observe that since $\pi^k G = d'$, we have: 

\begin{equation}\label{prob:gen-ub-eq-2}
\begin{array}{llll}
L(\pi^k) = & \max \ & c' v + d'u - \pi^k(Hv + Gu - h) & \\
& \mbox{s.t.} \ & Av = b, & \\
&& v \in \{0,1\}^n. &
\end{array}
\end{equation}

Thus, $L(\pi^k)$ is a relaxation of \eqref{prob:gen-ip-eq} obtained by penalizing constraints $Hz + Gu \leq h$ with $\pi^k \geq 0$. This implies $L(\pi^k) \geq Z^{IP}$ for all iterations $k$.

Solve problem \eqref{prob:gen-ub-eq} with $\pi = \pi^k$ and let $\hat v$  be the corresponding optimal solution. Define $Z^U_k = \min \{L(\pi^i) : i = 1,\ldots, k\}$. As before, we will have that $Z^U_k \geq Z^{IP}$ and $Z^L_k \leq Z^{LIN}$. 

If, at any iteration, we have that $Z^L_k \geq Z^U_k$ we can halt the algorithm. By defining $Z^{UB} = Z^U_k$ we will have that $Z^{IP} \leq Z^{UB} \leq Z^{LIN}$. 

On the other hand whenever $Z^L_k < Z^U_k$ we will have that column $\hat v - v^o$ has positive reduced cost in the master problem. Thus, letting $v^{k+1} = \hat v - v^o$ and $V^{k+1} = [V^k, v^{k+1}]$ we can continue iterating the algorithm.

\section{Distinct fractional values Lemma}\label{sec:frac-bz}

In this section we show that the BZ master problem will have bounded number of distinct fractional values. This fact is important in Section \ref{ssec:bz}, in order to argue why the use of the \emph{elementary basis} can reduce the number of columns in the BZ algorithm.

Consider the setting for the BZ algorithm, i.e, a problem of the form
\begin{equation}\label{appendix:BZ}
\begin{array}{llll}
Z^{BZ} = & \max \ & c'z + d'u & \\
& \mbox{s.t.} \ & Az \leq b & \\
&& Hz + Gu \leq h & 
\end{array}
\end{equation}
where $\{ z :  Az \leq b\} = \{ z :  z_i \leq z_j\ \forall \ (i,j) \in I,\ 0\leq z \leq 1\}$. It is well known that $A$ defines a totally unimodular matrix in such case, thus all extreme points of $\{z : Az\leq b\}$ are 0-1 vectors.\\

Recall that $z$ is an $n$-dimensional vector, and let $m$ be the dimension of $u$. In this section we prove the following result:

\begin{lemma}\label{lemma:fractional}
Let $(\bar z, \bar u)$ be an extreme point of \eqref{appendix:BZ}, and let $q$ be the number of rows of $H$ (and $G$). Then the number of distinct fractional values of $\bar z$ is at most $q-m$.
\end{lemma}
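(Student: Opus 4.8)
The plan is to exploit the structure of an extreme point of the polyhedron defined by \eqref{appendix:BZ}. Let $(\bar z, \bar u) \in \bbbr^{n+m}$ be an extreme point. Since \eqref{appendix:BZ} lives in $n+m$ dimensions, at an extreme point there must be $n+m$ linearly independent constraints that are tight. The constraints available are: the precedence constraints $z_i \leq z_j$ for $(i,j)\in I$, the bound constraints $0 \leq z_j \leq 1$, and the $q$ rows of $Hz + Gu \leq h$. The first step is to bookkeep these: at most $q$ of the tight constraints can come from $Hz+Gu\leq h$, so at least $n+m-q$ of the tight, linearly independent constraints must come from the precedence-plus-bounds system acting on the $z$ variables alone.

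The key combinatorial step is to analyze what the tight precedence and bound constraints say about $\bar z$. Form the directed graph $G=(\mV,\mE)$ on $\{1,\dots,n\}$ with an arc for each $(i,j)\in I$, and consider the relation ``$\bar z_i = \bar z_j$ forced by a tight precedence constraint'' together with ``$\bar z_j \in \{0,1\}$ forced by a tight bound''. Collapsing each maximal set of coordinates tied together by tight precedence constraints into a single ``block'', the tight constraints from the $z$-system impose: (a) equalities between blocks, and (b) fixing of some blocks to $0$ or $1$. If $b$ is the number of blocks and $f$ of them are fixed to $0$ or $1$, then the number of linearly independent $z$-constraints that are tight is at most $(n - b) + f$ (the $n-b$ counting independent equalities within the graph that merge $n$ coordinates into $b$ blocks, and $f$ the bound-fixings). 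Hence $n + m - q \leq (n-b) + f$, i.e. $b - f \leq q - m$. Since the number of distinct fractional values of $\bar z$ is at most the number of non-fixed blocks, which is $b - f$, we get the claimed bound $q - m$.

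The main obstacle I anticipate is being careful with the counting of linearly independent tight constraints, specifically verifying that a set of tight precedence constraints spanning a subgraph with $c$ connected components on $n$ vertices contributes rank exactly $n - c$ to the constraint matrix (standard graph-incidence fact, but it must be combined correctly with the rank contributed by the bound constraints), and confirming that the $u$ variables force genuinely $m$ extra independent constraints among the $H$-rows — equivalently, that the $q$ rows of $[H\mid G]$ that could be tight must supply the ``missing'' rank for both the $u$ directions and any merging/fixing not already forced in $z$. A clean way to handle this is to argue directly: restrict to the face, note $\bar u$ is then pinned down by at most $q$ of the $H$-rows once $\bar z$ is known on that face, and do a dimension count on the affine hull of the face. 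I would also double-check the edge case where the totally unimodular structure is invoked — it guarantees the vertices of $\{z : Az \le b\}$ alone are $0$-$1$, which is what makes ``fractional values only arise from the interaction with $Hz+Gu\le h$'' precise — and make sure the statement degenerates gracefully when $q \le m$ (then $\bar z$ has no fractional components at all).
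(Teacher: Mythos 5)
Your proposal is correct and follows essentially the same route as the paper: count $n+m$ linearly independent binding constraints at the extreme point, observe that at most $q$ of them come from $Hz+Gu\le h$, and conclude that the binding precedence/bound rows must have rank at least $n+m-q$, whose deficiency bounds the number of distinct fractional values. The only difference is presentational: the paper measures that deficiency as $\dim(\text{null}(\bar A))$ and exhibits the indicator vectors of the fractional level sets as linearly independent null vectors, while you compute $\text{rank}(\bar A)=(n-b)+f$ combinatorially via components of the tight-precedence graph and bound the fractional values by the $b-f$ non-fixed blocks --- the same quantity.
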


Note that by assuming an extreme point $(\bar z, \bar u)$ of \eqref{appendix:BZ} exists, we are implicitly assuming $q\geq m$. In fact, if $(\bar z, \bar u)$ is an extreme point of \eqref{appendix:BZ}, then $\bar u$ must be an extreme point of $\{ u: Gu \leq h - H \bar z\}$. This, however, requires that $G$ have at least $m$ rows, thus implying that $q-m \geq 0$ holds.\\

An almost identical result is originally proved in  \cite{bienstock09}. We present such proof here, adapted to our notation and including the $u$ variables, which modifies the final result. Note that this theorem also applies to every \emph{restricted BZ master problem} \eqref{alg:gen:ub}, since the latter is obtained by simply equating sets of variables of \eqref{alg:prob:gen}, thus maintaining the same structure.\\

To prove Lemma \ref{lemma:fractional} we will make use of the \emph{elementary basis} introduced in Section \ref{ssec:bz}, that is, we write
$$\bar{z} = \sum\limits_{i=1}^{k} \lambda_i \bar{v}^i$$

where $\{\lambda_1, \ldots, \lambda_k\} $ are the distinct non-zero values of $\bar{z}$, and each $\bar{v}^i$ is the corresponding indicator vector for $\bar z = \lambda_i$. Without loss of generality we assume $\lambda_1 = 1$, so the fractional values of $\bar z$ are given by $\{\lambda_2, \ldots, \lambda_k\}$.

\begin{lemma}\label{lemma:kernel}
Let $(\bar z, \bar u)$ be an extreme point of \eqref{appendix:BZ}, and decompose $\bar z$ as above. Additionally, denote $\bar A$ the sub-matrix of $A$ corresponding to binding constraints at $(\bar z, \bar u)$. Then $\bar{v}^i \in \text{null}(\bar{A})$ for all $2\leq i \leq k$, and they are linearly independent.
\end{lemma}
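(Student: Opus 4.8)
The plan is to establish the two assertions of Lemma~\ref{lemma:kernel} separately, the linear-independence part being essentially immediate. By construction the vectors $\bar v^2,\dots,\bar v^k$ are the indicator vectors of the level sets $L_i=\{\,j:\bar z_j=\lambda_i\,\}$ for $i=2,\dots,k$; since $\lambda_2,\dots,\lambda_k$ are \emph{distinct} and each is a value actually attained by $\bar z$, the sets $L_i$ are nonempty and pairwise disjoint. Nonzero $0$--$1$ vectors with pairwise disjoint supports are linearly independent, which settles that claim.

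For the membership $\bar v^i\in\text{null}(\bar A)$ with $i\ge 2$, I would argue one row of $\bar A$ at a time, using that every row of $A$ has one of three forms (write $e_a$ for the $a$-th unit vector): a precedence row $e_a-e_b$ coming from $z_a\le z_b$ with $(a,b)\in I$, a lower-bound row $-e_a$ coming from $z_a\ge 0$, or an upper-bound row $e_a$ coming from $z_a\le 1$. Fix $i\in\{2,\dots,k\}$ and take any row of $\bar A$, i.e.\ a constraint binding at $(\bar z,\bar u)$. (i) If it is a precedence row $e_a-e_b$, then $\bar z_a=\bar z_b$, so $a$ and $b$ lie in the same level set; hence $\bar v^i_a=\bar v^i_b$ and the row contributes $\bar v^i_a-\bar v^i_b=0$. (ii) If it is a lower-bound row $-e_a$, then $\bar z_a=0$, and since every $\lambda_i\ne 0$ we have $a\notin L_i$, so $\bar v^i_a=0$ and the row contributes $-\bar v^i_a=0$. (iii) If it is an upper-bound row $e_a$, then $\bar z_a=1=\lambda_1$, and since $i\ge 2$ and the $\lambda$'s are distinct we again have $a\notin L_i$, so $\bar v^i_a=0$ and the row contributes $\bar v^i_a=0$. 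In all three cases the inner product vanishes, so $\bar A\bar v^i=0$.

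The computation is routine; the only point that genuinely uses the setup rather than generalities is the normalization $\lambda_1=1$ fixed at the start of the section, which is exactly what makes Case~(iii) work: binding upper-bound constraints touch only coordinates where $\bar z=1$, i.e.\ coordinates in the support of $\bar v^1$, and are therefore harmless for every $\bar v^i$ with $i\ge 2$. This is also why $\bar v^1$ must be excluded from the statement --- if some upper bound is binding then the corresponding row contributes $\bar v^1_a=1\ne 0$, so $\bar v^1$ need not lie in $\text{null}(\bar A)$. With Lemma~\ref{lemma:kernel} available, Lemma~\ref{lemma:fractional} should follow from a dimension count: the $k-1$ independent vectors $\bar v^2,\dots,\bar v^k$ all live in $\text{null}(\bar A)$, while extremality of $(\bar z,\bar u)$ forces the binding constraints to pin down all $n+m$ coordinates of $(\bar z,\bar u)$, so that at most $q-m$ of the $q$ rows of $H$ (and $G$) are ``available'' to cut down $\text{null}(\bar A)$, giving $k-1\le q-m$.
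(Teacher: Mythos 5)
Your proof is correct and follows essentially the same route as the paper's: linear independence from disjoint supports, and a row-by-row check that binding precedence rows give equal indicator entries while binding bound rows touch only coordinates where $\bar z_a=0$ or $\bar z_a=1=\lambda_1$, hence lie outside the support of every $\bar v^i$ with $i\ge 2$. The extra remarks on why $\bar v^1$ must be excluded and how the dimension count completes Lemma~\ref{lemma:fractional} are accurate but not needed here.
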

\begin{proof}
The $\bar{v}^i$ vectors are clearly linearly independent since they have disjoint support. As for the first claim, consider a precedence constraint $z_i \leq z_j$ $(i,j) \in I$. If such constraint is binding in $(\bar z, \bar u)$ then $\bar z_i = \bar z_j$, which implies 
$$\bar{v}^l_i = 1 \Longleftrightarrow \bar{v}^l_j = 1,\quad l = 1,\ldots, k$$
thus, $\bar{v}^l_i  = \bar{v}^l_j $ $\forall l$. On the other hand, if a constraint $z_i \leq 1$ or $z_i \geq 0$ is binding at $(\bar z, \bar u)$ then clearly $\bar{v}_j^l = 0$ for $l\geq 2$. This proves $\bar{A} \bar{v}^l = 0$ for $2\leq l \leq k$.
\end{proof}

\begin{lemma}\label{lemma:kernelsize}
Let $(\bar z, \bar u)$ and $q$ be as in Lemma \ref{lemma:fractional}. Additionally, let $\bar A$ be the sub-matrix of $A$ corresponding to binding constraints at $(\bar z, \bar u)$. Then $\text{dim}\left( \text{null}(\bar{A}) \right) \leq q - m$.
\end{lemma}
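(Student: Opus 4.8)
The plan is to invoke the standard characterization of vertices of a polyhedron: a point is an extreme point of a polyhedron in $\bbbr^{n+m}$ if and only if the coefficient vectors of the constraints active at that point span all of $\bbbr^{n+m}$. First I would partition the constraints of \eqref{appendix:BZ} that are binding at $(\bar z, \bar u)$ into two groups. The binding constraints among $Az \leq b$ have rows which, viewed as functionals on the variables $(z,u)$, take the form $(\bar A\ \ 0)$ — the $0$ block reflecting that precedence and bound constraints involve only $z$ and not $u$. The binding constraints among $Hz + Gu \leq h$ have rows which I collect into a submatrix $(\bar H\ \ \bar G)$. Since $(\bar z,\bar u)$ is an extreme point, the stacked matrix $\begin{pmatrix} \bar A & 0 \\ \bar H & \bar G \end{pmatrix}$ has rank exactly $n+m$.

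Next I would bound this rank from above by subadditivity of rank over blocks of rows: $n + m = \text{rank}\begin{pmatrix} \bar A & 0 \\ \bar H & \bar G \end{pmatrix} \leq \text{rank}(\bar A) + \text{rank}(\bar H\ \ \bar G)$. Because $H$ and $G$ each have $q$ rows, the submatrix $(\bar H\ \ \bar G)$ has at most $q$ rows, so $\text{rank}(\bar H\ \ \bar G) \leq q$, which yields $\text{rank}(\bar A) \geq n + m - q$. Since $\bar A$ has $n$ columns, the rank--nullity theorem gives $\text{dim}\left(\text{null}(\bar A)\right) = n - \text{rank}(\bar A) \leq n - (n + m - q) = q - m$, which is the claimed bound.

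The argument is essentially routine linear algebra, so there is no real obstacle; the only point requiring attention is to keep track of the zero block in the rows $(\bar A\ \ 0)$ — it is precisely the fact that these constraints say nothing about $u$, so that all $m$ of the $u$-directions must be spanned by the at most $q$ rows $(\bar H\ \ \bar G)$, that produces the ``$-m$'' sharpening over the Bienstock--Zuckerberg estimate (and makes the implicit assumption $q \geq m$ natural). Finally, I would note that Lemmas \ref{lemma:kernel} and \ref{lemma:kernelsize} immediately combine to give Lemma \ref{lemma:fractional}: the $k-1$ fractional indicator vectors $\bar v^2,\dots,\bar v^k$ are linearly independent elements of $\text{null}(\bar A)$, hence $k-1 \leq \text{dim}\left(\text{null}(\bar A)\right) \leq q - m$.
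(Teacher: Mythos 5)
Your proof is correct and follows essentially the same route as the paper: both arguments use the fact that the active-constraint matrix $\left[\begin{smallmatrix} \bar A & 0 \\ \bar H & \bar G \end{smallmatrix}\right]$ at an extreme point has rank $n+m$, bound the contribution of the $[\bar H\ \bar G]$ block by $q$, deduce $\text{rank}(\bar A) \geq n+m-q$, and conclude via rank--nullity. The only cosmetic difference is that the paper introduces the intermediate quantity $\bar q = \text{rank}(\bar H\ \bar G)$ before bounding it by $q$, whereas you invoke row-block rank subadditivity directly.
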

\begin{proof}
Let $\bar H, \bar G$ be the set of rows of $H$ and $G$ corresponding to active constraints in $(\bar z, \bar u)$. Since this is an extreme point, the matrix
$$\left[ \begin{array}{cc} \bar{A} & 0 \\ 
\bar{H} & \bar{G} \end{array} \right] $$
must contain at least $n+m$ linearly independent rows. Suppose $[ \bar H\  \bar G]$ has $\bar q$ linearly independent rows. This implies $\bar{A}$ must have $n+m - \bar q$ linearly independent rows, and in virtue of the rank-nullity theorem,
$$\text{dim}\left( \text{null}(\bar{A}) \right) = n - \text{rank}(\bar{A})  = \bar q - m \leq q - m$$
\end{proof}

Lemma \ref{lemma:fractional} is then obtained as a direct corollary of Lemmas \ref{lemma:kernel} and \ref{lemma:kernelsize}, since these two prove $k-1 \leq q - m$, and $k-1$ is exactly the number of fractional values of $\bar z$.
\end{document}